%
%
%

\documentclass[graybox,envcountsect,envcountsame]{svmult}


\usepackage{mathptmx}       
\usepackage{helvet}         
\usepackage{courier}        
\usepackage{type1cm}        
%
\usepackage{makeidx}         
\usepackage{graphicx}        
\usepackage{multicol}        
\usepackage[bottom]{footmisc}

\usepackage{amsmath}
\usepackage{amssymb}
\newcommand{\blu}{\color{black}}
\newcommand{\blk}{\color{black}}


\makeindex             


\def\Id{{\rm Id}}

\usepackage{enumerate}

\newcommand{\func}[1]{\operatorname{#1}}

\begin{document}

\title*{Specker Algebras: A Survey}
\author{G.~Bezhanishvili, P.J.~Morandi, B.~Olberding}
\institute{ \at Department of Mathematical Sciences, New Mexico State University, Las Cruces NM 88003 \blu USA\blk,
\email{guram@nmsu.edu, pmorandi@nmsu.edu, \blu bruce\blk@nmsu.edu}}
%
%
\maketitle
\vspace{-1.25in}
\begin{center}
\emph{Dedicated to the memory of Elbert Walker}
\end{center}

\bigskip

\abstract*{For a commutative ring $R$ with identity, a Specker $R$-algebra is a commutative unital $R$-algebra generated by a Boolean algebra of idempotents,
each nonzero element of which is faithful. Such algebras have arisen in the study of $\ell$-groups, idempotent-generated rings,
Boolean powers of commutative rings, Pierce duality, and rings of continuous real-valued functions. We trace the origin of this notion from
early studies of subgroups of bounded integer-valued functions to a variety of current contexts involving ring-theoretic, topological,
and homological aspects of idempotent-generated algebras.}

\abstract{For a commutative ring $R$ with identity, a Specker $R$-algebra is a commutative unital $R$-algebra generated by a Boolean algebra of idempotents,
each nonzero element of which is faithful. Such algebras have arisen in the study of $\ell$-groups, idempotent-generated rings,
Boolean powers of commutative rings, Pierce duality, and rings of continuous real-valued functions. We trace the origin of this notion from
early studies of subgroups of bounded integer-valued functions to a variety of current contexts involving ring-theoretic, topological,
and homological aspects of idempotent-generated algebras.
\keywords{$\ell$-group, singular element, Specker algebra, idempotent-generated algebra, Stone space, rings of continuous functions.\smallskip \\
\textbf{MSC:} 06F20, 06E15, 16G30, 54H10.}}

\section{Introduction}
\label{intro}

Our goal in this article is to give some historical and practical motivation for a class of commutative algebras known as Specker algebras that
are encountered in various guises in contexts such as Baer-Specker theory, hyperarchimedean $\ell$-groups and vector lattices, rings of continuous
functions, Boolean powers of commutative rings, and Pierce sheaves.

Before outlining our survey of this topic, we wish to acknowledge first our debt to Elbert Walker as friend, mentor, colleague, and researcher.
While our topic does not directly build on Elbert's work, it cuts across several of his interests in a path that would be perfectly familiar
to him, from abelian group theory and ring theory to foundations, category theory, and homological algebra. When we arrived at NMSU at different
points during the years 1990-2002, Elbert was beginning his second research career in statistics and fuzzy logic. Characteristically, rather
than distancing him from the foundations and algebra research groups at NMSU, his new research only deepened his impact on these groups,
resulting in new collaborations among the seminar members, as well as new  directions for research that incorporated the algebraic approaches
to fuzzy logic   pioneered by Elbert and Carol Walker.
In fact, it was not unusual to hear Elbert speak in the algebra seminar one year on abelian $p$-groups, the next on type 2 fuzzy sets, and the
next on localization of categories.

It is in this same spirit of appreciation of multiple perspectives that we hope to present the topics in this article. Our approach to the topic
draws heavily on Elbert's influence and persuasive viewpoint on mathematics. As Elbert would often remind us, his interest was not only in the
proof of a theorem but {\it why} the theorem was true. The topic of this survey grew from just such a desire to understand a similar ``why''
for certain algebraic-topological dualities such as Gelfand-Naimark-Stone duality, Kakutani-Krein-Yosida duality,
de Vries duality, and Pierce duality. While these dualities remain far in the background of this article, connections to them can be seen
more explicitly  in \cite{BMMO15b, BMO13a, BMO16, BMO18} and will be further elaborated on in future articles. For the present article, we want instead to
focus on a rather simply defined class of commutative algebras, that of Specker algebras, which plays a key role in our algebraic approach
to these dualities.

In Section 2 we discuss the \blu Specker group concept\blk, which originates with Specker's work on subgroups of the Baer-Specker group.
We trace this concept through N\"obeling's results on freeness  and into Conrad's work, which gives an axiomatic description of Specker groups 
by locating them within the class of \blu $\ell$-groups (lattice-ordered groups). The culmination of this line of research is the representation of a Specker $\ell$-group as a group of $\mathbb{Z}$-valued continuous functions on a locally compact Hausdorff zero-dimensional space (see Theorem~\ref{rep}). Although we make no claim of originality for the results themselves, we give self-contained proofs of most of the results in Section 2, emphasizing a duality-theoretic approach in the spirit of Stone. \blk



Using the fact from Section 2 that multiplication is always present on Specker $\ell$-groups,  we shift focus in Section 3 from groups to rings,
and from rings to algebras. We define Specker $R$-algebras for an arbitrary commutative ring with identity, and we discuss several of their
properties. In this treatment, we follow \cite{BMMO15a}, but we also point out additional connections and applications from \cite{BMMO15b, BMO18}.

In the final section of the paper, we consider Specker $R$-algebras in the two classical cases, $R = {\mathbb{Z}}$ and $R  = {\mathbb{R}}$.
We use the former case and the work of Bergman \cite{Ber72}  to revisit the freeness results for Specker $\ell$-groups discussed in
Section 2. In the case $R = {\mathbb{R}}$, we discuss the role Specker algebras play in the theory of continuous real-valued functions
on Stone spaces.

\section{Specker $\ell$-groups}
\label{BS}

The origins of Specker $\ell$-groups can be traced back to Baer-Specker theory. The product $G= \prod_{i=1}^\infty {\mathbb{Z}}$ of countably many copies of
$\mathbb Z$ is termed the {\it Baer-Specker group}, so called because Baer \cite{Bae37} proved first that this group is not free while
Specker \cite{Spe50} proved that certain subgroups of $G$, including the countable subgroups, are free. N\"{o}beling \cite{Nob68} generalized
some of Specker's work to obtain that for any set $X$ the additive group $B(X,{\mathbb{Z}})$ of bounded functions from $X$ to ${\mathbb{Z}}$
is free and has a basis of characteristic functions. As we discuss in Section~\ref{sec:Bergman}, Bergman \cite{Ber72} has given a different proof
of this fact that is important for our point of view. N\"{o}beling then considered certain subgroups of $B(X,{\mathbb{Z}})$:

\begin{definition} {(N\"{o}beling \cite[p.~41]{Nob68})}
A group $G$ is called a {\it Specker group} if \blu $G$ is a subgroup \blk of $B(X,{\mathbb{Z}})$ for some set $X$, and for each function
$g \in G$ and $n \in {\mathbb{Z}}$, the characteristic function of $g^{-1}(n)$ is in $G$.
\end{definition}

A theorem of N\"{o}beling \cite[Thm.~2]{Nob68} implies that every Specker group is free and has a basis of characteristic functions.
Thus, N\"{o}beling's work gives a new class of subgroups of ${\mathbb{Z}}^X$ which are free. However, N\"{o}beling's definition is lacking
the robustness of an axiomatic definition in the sense that it depends on an embedding into a power of ${\mathbb{Z}}$.

This lack of an axiomatization is addressed by Conrad in the 1974 article \cite{Con74}. He first observes  that Specker groups are necessarily \blu $\ell$-groups\blk\footnote{For background on $\ell$-groups, see \cite[Ch.~XIII]{Bir79}.} with respect to the pointwise order. In fact, the Specker subgroups of $B(X,{\mathbb{Z}})$ for some
set $X$ are precisely the \blu $\ell$-subgroups \blk of $B(X,{\mathbb{Z}})$ \cite[4.2]{Con74}. Moreover, every Specker group is closed with respect
to the pointwise multiplication of  $B(X,{\mathbb{Z}})$ \cite[4.2]{Con74}. As with the definition of a Specker group, both of these properties
assume an ambient group $B(X,{\mathbb{Z}})$ in order to formulate them. To free himself of this ambient group, Conrad introduces the following
notions.

\begin{definition}
Let $G$ be an abelian $\ell$-group.
\begin{enumerate}[$(1)$]
\item An element $s \in G$ is {\it singular} if $s \ge 0$ and $g \wedge (s-g) = 0$ for each $0 \leq g \le s$.
\item Call $G$ a {\em Specker $\ell$-group} if $G$ is generated by its singular elements.
\end{enumerate}
\end{definition}

\begin{remark}
\begin{enumerate}[$(1)$]
\item[]
\item The above definition of a singular element is slightly different from Conrad's definition \cite[4.3]{Con74} in that he
does not assume 0 to be singular.
\item Conrad \cite[4.6]{Con74} originally called Specker $\ell$-groups $S$-groups.
\end{enumerate}
\end{remark}

\begin{theorem} \label{Conrad}  {\em (Conrad \cite[4.2]{Con74})}
An abelian group $G$ is isomorphic to a Specker group iff there is a lattice order on $G$ such that $G$ is a Specker $\ell$-group.
\end{theorem}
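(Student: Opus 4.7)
The plan is to prove each direction of the biconditional separately. For the forward direction, assume $G$ is a Specker subgroup of $B(X,\mathbb{Z})$ and equip it with the pointwise order. First I would verify that $G$ is an $\ell$-subgroup: for $g, h \in G$ the difference $g - h$ lies in $G$; each level-set indicator $\chi_{(g-h)^{-1}(n)}$ lies in $G$ by the Specker condition; and the pointwise positive part $(g-h)^+ = \sum_{n > 0} n\, \chi_{(g-h)^{-1}(n)}$ is a finite sum in $G$. Hence $g \vee h = h + (g-h)^+ \in G$, and similarly $g \wedge h \in G$. Next, every characteristic function $\chi_A \in G$ is singular in the pointwise order: if $0 \le g \le \chi_A$ then $g$ is $0/1$-valued, so $g = \chi_B$ for some $B \subseteq A$ and $g \wedge (\chi_A - g) = \chi_B \wedge \chi_{A \setminus B} = 0$. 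Since every bounded $g \in G$ decomposes as the finite sum $g = \sum_{n \in \mathbb{Z}} n\, \chi_{g^{-1}(n)}$ of singular elements of $G$, the group $G$ is generated by its singular elements, i.e., $G$ is a Specker $\ell$-group.

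For the reverse direction, let $G$ be a Specker $\ell$-group and $S \subseteq G$ its set of singular elements. The plan is to realize $G$ concretely as a subgroup of $B(X,\mathbb{Z})$ for a Stone-type space $X$. First I would show that $S$, with meet and join inherited from $G$ and with relative complement $s \ominus t := s - (s \wedge t)$, forms a generalized Boolean algebra; the singularity axiom is exactly what forces the decomposition $s = (s \wedge t) + (s \ominus t)$ to behave as a disjoint union. Taking $X$ to be the Stone space of $S$ (its locally compact Stone dual when $S$ has no top), each singular $s$ corresponds to a compact clopen $\widehat{s} = \{x \in X : s \in x\}$, and the assignment $s \mapsto \chi_{\widehat{s}}$ is a Boolean-algebra isomorphism onto the indicators of compact clopens in $B(X,\mathbb{Z})$.

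To define the embedding $\varphi \colon G \to B(X,\mathbb{Z})$, I would first establish a canonical form: every $g \in G$ can be written uniquely as $\sum_{i=1}^n n_i s_i$ with nonzero integers $n_i$ and pairwise disjoint nonzero singulars $s_i$, obtained from any presentation of $g$ as a $\mathbb{Z}$-linear combination of singulars by iterated refinement inside the Boolean algebra $S$. Setting $\varphi(\sum n_i s_i) := \sum n_i \chi_{\widehat{s_i}}$, disjointness of the $\widehat{s_i}$ in $X$ makes $\varphi$ a well-defined group homomorphism, and the level sets of $\varphi(g)$ are finite unions of the $\widehat{s_i}$, so their indicators lie in $\varphi(G)$ — the image is a Specker group. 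Injectivity follows because distinct nonzero elements of $S$ yield distinct nonempty clopens via Stone representation, while lattice compatibility reduces via canonical forms to $\widehat{s \vee t} = \widehat{s} \cup \widehat{t}$ and $\widehat{s \wedge t} = \widehat{s} \cap \widehat{t}$.

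The main obstacle is the first step of the reverse direction: verifying that $S$ is closed under the $\ell$-group operations of $G$ and carries the structure of a generalized Boolean algebra, together with the existence of a canonical disjoint-singular decomposition of every element of $G$. Both require somewhat delicate manipulations inside an $\ell$-group using only the singularity axiom, and these constitute the technical heart of the argument.
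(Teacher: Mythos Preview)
Your forward direction is correct and matches what the paper attributes to Conrad. Your reverse direction follows exactly the strategy the paper uses for the closely related representation $G \cong C_k(X,\mathbb{Z})$ (Theorem~\ref{rep}): show that the singular elements $S$ form a generalized Boolean algebra (Lemma~\ref{singular unit}), pass to its Stone dual $X$, and use orthogonal decompositions (Lemmas~\ref{orthogonality} and~\ref{unique}) to build the embedding. The paper does not supply its own proof of Theorem~\ref{Conrad}, citing Conrad instead, so in that sense your plan is more explicit than what the paper offers at this point.

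There is, however, a genuine gap in your check that the image $\varphi(G)$ is a Specker group. N\"obeling's definition requires $\chi_{g^{-1}(n)} \in G$ for \emph{every} $n \in \mathbb{Z}$, including $n=0$. Taking $g=0$ forces $\chi_X = 1$ to lie in any nonzero Specker group, so every Specker group has a weak order-unit under the pointwise order. But your image $\varphi(G)$ consists of compactly supported functions on the Stone space $X$ of $S$; when $G$ has no weak order-unit, $S$ has no top, $X$ is non-compact, and $\chi_X \notin \varphi(G)$. Thus your claim that ``the level sets of $\varphi(g)$ are finite unions of the $\widehat{s_i}$'' fails at the value $0$, and $\varphi(G)$ is \emph{not} a Specker group in that case (for instance, with $G=\bigoplus_{\mathbb{N}}\mathbb{Z}$ your $\varphi(G)$ is the group of finitely supported sequences in $B(\mathbb{N},\mathbb{Z})$, which fails the Specker condition at $n=0$). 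Your construction therefore yields Theorem~\ref{rep} on the nose, but not Theorem~\ref{Conrad}. Since a Specker $\ell$-group without weak order-unit cannot be $\ell$-isomorphic to a Specker group, the isomorphism in Conrad's theorem is only a group isomorphism in that case, and closing the gap needs an additional step --- e.g., passing to the one-point compactification as in Theorem~\ref{ideal} together with a freeness/rank argument as in Section~\ref{sec:Bergman}.
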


Theorem~\ref{Conrad} provides an axiomatization of Specker groups that is independent of any ambient embedding.
A detailed study of Specker $\ell$-groups was conducted by Conrad \cite{Con74} and Conrad and Darnel \cite{CD92,CD98,CD01}.
One of the key results, which can be found in \cite[Prop.~57.21]{Dar95}, is that an $\ell$-group $G$ is a Specker $\ell$-group iff there is a locally compact Hausdorff zero-dimensional space
$X$ such that $G$ is isomorphic to the $\ell$-group $C_k(X,\mathbb{Z})$ of all continuous $\mathbb{Z}$-valued functions with compact support. We give a different proof of this result relying more explicitly on Boolean algebra theoretic methods.

We recall that a {\em generalized Boolean algebra} is a distributive lattice $L$ with $0$ such that for each $a\in L$, the interval $[0,a]$ is a
Boolean algebra. Clearly if a generalized Boolean algebra has a $1$, then it is a Boolean algebra.
By \cite[Prop.~2.5]{CD98}, the set of singular elements of an abelian $\ell$-group forms a generalized Boolean algebra. We give an elementary proof for the reader's convenience.

\begin{lemma} \label{singular unit}
Let $G$ be an abelian $\ell$-group and let $S$ be the set of all singular elements of $G$.
\begin{enumerate}[$(1)$]
\item If $g$ is singular and $0 \le f \le g$, then $f$ is singular.
\item If $g_1, g_2$ are singular, then $g_1 \vee g_2$ is singular.
\item $S$ is a generalized Boolean algebra under the meet and join operations of $G$.
\end{enumerate}
\end{lemma}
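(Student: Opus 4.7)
The plan is to prove (1) and (2) in turn, after which (3) follows by assembling pieces. For (1), I will lean on monotonicity of subtraction and meet in an $\ell$-group: if $0 \le h \le f \le g$, then $0 \le f-h \le g-h$, so
\[
0 \le h \wedge (f-h) \le h \wedge (g-h) = 0
\]
by the singularity of $g$. This gives (1) with essentially no computation.

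For (2), my strategy is to break $g_1 \vee g_2$ into pairwise disjoint singular summands. Set $a = g_1 \wedge g_2$ and $b_i = g_i - a$, so that $g_1 \vee g_2 = a + b_1 + b_2$ by the identity $x + y = (x\wedge y)+(x\vee y)$. A short computation shows $b_1 \wedge b_2 = 0$, and $a \wedge b_i = 0$ follows from the singularity of $g_i$ applied to $0 \le a \le g_i$ with complement $b_i = g_i - a$. By part (1), $a$, $b_1$, $b_2$ are all singular. It therefore suffices to show that the sum of two disjoint singular elements is singular, and then to iterate.

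The disjoint-sum step is the heart of the matter and the main obstacle. Suppose $s_1, s_2 \in S$ with $s_1 \wedge s_2 = 0$, and let $0 \le h \le s_1 + s_2$. Put $h_i = h \wedge s_i$. Distributivity of the $\ell$-group gives $h_1 \vee h_2 = h \wedge (s_1 \vee s_2) = h$, while $h_1 \wedge h_2 \le s_1 \wedge s_2 = 0$, so $h_1 + h_2 = h_1 \vee h_2 = h$ and consequently $s_1 + s_2 - h = (s_1 - h_1) + (s_2 - h_2)$, also a join of disjoint elements. The four meets $h_i \wedge (s_j - h_j)$ all vanish: the diagonal ones by singularity of the $s_i$, the off-diagonal ones because $h_i \le s_i$ and $s_j - h_j \le s_j$ with $s_1 \wedge s_2 = 0$. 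Using the rule that disjoint positive elements add to their join, distributivity then yields
\[
h \wedge (s_1 + s_2 - h) = (h_1 \vee h_2) \wedge \bigl((s_1-h_1) \vee (s_2-h_2)\bigr) = 0,
\]
so $s_1 + s_2$ is singular. Applying this first to $b_1, b_2$ and then to $a$ and $b_1 + b_2$ (which are disjoint since $a \wedge b_i = 0$ for $i=1,2$) completes (2).

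For (3), part (2) gives closure of $S$ under $\vee$, and part (1) gives closure under $\wedge$ since $g_1 \wedge g_2 \le g_1 \in S$. Distributivity is inherited from $G$, and $0 \in S$ by convention. For $g \in S$, part (1) shows the interval $[0,g]$ in $S$ coincides with its interval in $G$; singularity of $g$ supplies $h \wedge (g-h) = 0$, and the $\ell$-group identity gives $h \vee (g-h) = h + (g-h) - h\wedge(g-h) = g$. Thus every element of $[0,g]$ has a complement, making $[0,g]$ a complemented distributive lattice, i.e.\ a Boolean algebra, and $S$ a generalized Boolean algebra.
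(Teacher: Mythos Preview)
Your proof is correct. Parts (1) and (3) are essentially identical to the paper's arguments, but your treatment of (2) follows a genuinely different route.

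The paper proves (2) directly: given $0 \le f \le g_1 \vee g_2$, it expands
\[
f \wedge \bigl[(g_1 \vee g_2) - f\bigr] \;=\; \bigl[f \wedge (g_1 - f)\bigr] \vee \bigl[f \wedge (g_2 - f)\bigr]
\]
via translation invariance and distributivity, and then bounds each term $f \wedge (g_i - f)$ above by $(f \wedge g_i) \wedge \bigl[g_i - (f \wedge g_i)\bigr] = 0$ using the singularity of $g_i$. This is a short one-pass computation that never leaves the element $g_1 \vee g_2$.

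Your approach instead decomposes $g_1 \vee g_2 = a + b_1 + b_2$ into three pairwise disjoint singular summands and establishes the auxiliary lemma that a sum of two disjoint singular elements is singular. This is slightly longer, but the intermediate lemma is independently useful (it is essentially the inductive engine behind the orthogonal decompositions used later in the paper), and your argument makes that structure explicit rather than hiding it inside an inequality chain. The paper's version, by contrast, is more economical and avoids the need to verify the four cross-meets $h_i \wedge (s_j - h_j)$ separately. Both arguments rely on the same underlying facts (distributivity, translation invariance, and the identity $x + y = (x \vee y) + (x \wedge y)$), so the difference is one of organization rather than depth.
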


\begin{proof}
(1). Suppose that $g$ is singular and $0 \le f \le g$. Let $e \in G$ with $0 \le e \le f$. Since $f \le g$ and $g$ is singular,
$0 \le (f-e) \wedge e \le (g-e) \wedge e = 0$ . Therefore, $(f-e) \wedge e = 0$, so $f$ is singular.

(2). Let $f \in G$ with $0 \le f \le g_1 \vee g_2$. Then
\[
f \wedge [(g_1 \vee g_2) - f] = f \wedge [(g_1 - f) \vee (g_2 - f)] = [f \wedge (g_1 - f)] \vee [f \wedge (g_2 - f)].
\]
Now, $f \wedge (g_1 - f) \le f \wedge [g_1 - (f \wedge g_1)]$, and this latter term is bounded by $g_1$ as $g_1 - (f\wedge g_1) \le g_1$.
Consequently,\blu
\[
f \wedge [g_1 - (f \wedge g_1)] = f\wedge g_1 \wedge [g_1 - (f \wedge g_1)]. 
\]\blk
Since $g_1$ is singular and
$0 \le f\wedge g_1 \le g_1$, we have $(f \wedge g_1) \wedge [g_1 - (f\wedge g_1)] = 0$. Therefore, $f \wedge (g_1-f) \le 0$. Similarly,
$f \wedge (g_2 - f) \le 0$, and so from the above equation, $f \wedge [(g_1 \vee g_2) - f] \le 0$. Clearly this is nonnegative, which
yields $f \wedge [(g_1 \vee g_2) - f] = 0$. Thus, $g_1 \vee g_2$ is singular.

(3). By (1) and (2), $S$ is closed under meet and join. Clearly $0 \in S$. Therefore, $S$ is a sublattice of $G$ with bottom.
Since $G$ is a distributive lattice, so is $S$. Let $g \in S$ and $0 \le f \le g$.
Because $g$ is singular, $f \wedge (g-f) = 0$. From this, by the $\ell$-group identity $x + y = x \vee y + x \wedge y$, we have $f \vee (g-f) = f + (g-f) = g$.
Thus, $g-f$ is the complement of $f$ in the interval $[0,g] \subseteq S$, proving that $S$ is a generalized Boolean algebra.
\qed
\end{proof}

Let $G$ be an abelian $\ell$-group. For $a\in G$, we recall that the \emph{positive} and \emph{negative} parts of $a$ are defined as
$a^+ = a \vee 0$ and $a^- = \blu -(a \wedge 0) = \blk (-a) \vee 0$, and we have $a = a^+ - a^-$. \blu In defining $a^-$ we follow \cite{LZ71} rather than \cite{Bir79}. \blk Also, the \emph{absolute value} of $a$ is defined as
$|a| = a \vee (-a)$, and we have $|a| = a^+ + a^-$.
We call $a,b\in G$ {\em orthogonal} if $a\wedge b=0$; and a representation
$a = \sum_{i=1}^n m_i g_i$ {\em orthogonal} if the $g_i$ are pairwise orthogonal. The next lemma provides a useful tool for working with Specker $\ell$-groups; for Part (1) see \cite[Prop.~1.2]{CD98}. 

\begin{lemma} \label{orthogonality}
Let $G$ be a Specker $\ell$-group and let $S$ be the set of all singular elements of $G$.
\begin{enumerate}[$(1)$]
\item Every element $a \in G$ has an orthogonal representation $a = \sum_{i=1}^n m_i g_i$ with the $m_i \in \mathbb{Z}$ and $g_i \in S$.
\item If $a = \sum_{i=1}^n m_i g_i$ is an orthogonal representation with each $0 \le m_i \in \mathbb{Z}$ and $g_i \in S$, then $a = \bigvee_{i=1}^n m_i g_i$.
\item If $a,b \in G$, then there are compatible orthogonal representations $a = \sum_{i=1}^n m_i g_i$ and $b = \sum_{i=1}^n p_i g_i$
for some $m_i, p_i \in \mathbb{Z}$ and $g_i\in S$.
\item Let $a = \sum_{i=1}^n m_ig_i$ be an orthogonal representation with each $0< g_i\in S$. Then $a^+ = \sum\{m_ig_i \mid m_i > 0\}$ and
$a^- = \sum\{ -m_ig_i \mid m_i < 0\}$.
\item If $a \in G$ has an orthogonal representation $a = \sum_{i=1}^n m_ig_i$ with each $0 < g_i \in S$,
then $a \ge 0$ iff each $m_i \ge 0$.
\end{enumerate}
\end{lemma}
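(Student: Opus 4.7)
The unifying idea is to exploit Lemma~\ref{singular unit}(3): $S$ is a generalized Boolean algebra, and every interval $[0,s]$ with $s\in S$ is a Boolean algebra. Thus any finite family $s_1,\ldots,s_k\in S$ lies in $[0,s]$ with $s=\bigvee s_i$ (singular by Lemma~\ref{singular unit}(2)) and generates a finite Boolean subalgebra whose atoms $g_1,\ldots,g_n$ are pairwise orthogonal singular elements; each $s_i$ is the join---equivalently the sum---of the atoms it dominates. This construction will be the engine for both (1) and (3).

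Proceeding slightly out of order, I would prove (2) first: induct on $n$, the crux being $n=2$, where $(m_1g_1)\wedge(m_2g_2)=0$ is a standard $\ell$-group consequence of $g_1\wedge g_2=0$ via the Riesz decomposition property, and then $x+y=x\vee y+x\wedge y$ gives $m_1g_1+m_2g_2=m_1g_1\vee m_2g_2$. Part (4) follows directly from (2): set $b=\sum_{m_i>0}m_ig_i$ and $c=\sum_{m_i<0}(-m_i)g_i$, both nonnegative by (2); distributing meets over the joins supplied by (2), together with the scaling identity $(mg_i)\wedge(ng_j)=0$ for $g_i\wedge g_j=0$, yields $b\wedge c=0$. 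Since $a=b-c$, the uniqueness of the $\ell$-group decomposition $a=a^+-a^-$ with $a^+\wedge a^-=0$ forces $a^+=b$ and $a^-=c$.

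With (2) and (4) in hand, part (1) goes as follows. Let $H\subseteq G$ be the set of elements admitting an orthogonal representation from $S$. Since $G$ is generated by $S$ as an $\ell$-group, it suffices to show that $H$ is an $\ell$-subgroup containing $S$. Closure of $H$ under $\pm$ uses the opening construction to put two given representations over a common orthogonal family of atoms, so that coefficients add or subtract termwise. Closure under $\wedge,\vee$ then follows from the standard identities $a\vee b=b+(a-b)^+$ and $a\wedge b=a-(a-b)^+$, which express the lattice operations via $\pm$ and the positive part, the latter being controlled by (4). Part (3) is just the common-refinement construction again, applied to the orthogonal representations of $a,b$ now guaranteed by (1).

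Part (5) is immediate from (4): $a\ge 0$ iff $a^-=0$, and by (4) and (2) the element $a^-$ is a join of strictly positive multiples of strictly positive singular elements, which vanishes iff no $m_i$ is negative. The chief obstacle is obtaining the scaling fact $(mg)\wedge(nh)=0$ from $g\wedge h=0$ cleanly; once this $\ell$-group identity is in hand, everything else in the lemma is combinatorics built on top of it and on the Boolean-algebraic refinement construction of the opening paragraph.
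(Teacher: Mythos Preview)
Your argument is correct and rests on the same core ingredients as the paper: the Boolean refinement of finitely many singular elements, the identity $x+y=x\vee y+x\wedge y$ for orthogonal $x,y$, and the decomposition $a=a^+-a^-$ with $a^+\wedge a^-=0$. The differences are organizational. The paper proves (1) first and more directly: given an arbitrary $\mathbb{Z}$-linear combination of singular elements (not assumed orthogonal), it refines two terms at a time via
\[
ng+mh \;=\; n\bigl(g-(g\wedge h)\bigr)+m\bigl(h-(g\wedge h)\bigr)+(n+m)(g\wedge h),
\]
and inducts; this uses neither (2) nor (4). Your route to (1) is more structural---proving (2) and (4) first, then showing the set $H$ of orthogonally representable elements is an $\ell$-subgroup by using (4) to control $(a-b)^+$. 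This is heavier, but it has the small advantage of working verbatim under the reading that $G$ is merely $\ell$-generated (rather than group-generated) by $S$. For (2) the paper bypasses Riesz decomposition with the one-line estimate $m_1g_1\wedge m_2g_2\le m_2g_1\wedge m_2g_2=m_2(g_1\wedge g_2)=0$ when $m_1\le m_2$. Parts (3), (4), (5) are handled in the paper essentially as you outline.
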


\begin{proof}
(1). Let $g,h \in S$. Then $g_1 := g - (g\wedge h)$, $h_1 := h - (g\wedge h)$, and $g\wedge h$ are all singular by Lemma~\ref{singular unit}(1).
We have\blu
\[
g_1 \wedge (g\wedge h) = [g- (g \wedge h)]\wedge (g\wedge h) = 0
\]\blk
since $g$ is singular. Similarly, $h_1 \wedge (g\wedge h) = 0$.
Also,
\[
g_1 \wedge h_1 = [g - (g \wedge h)] \wedge [h - (g \wedge h)] = (g \wedge h) - (g \wedge h) = 0.
\]
Therefore, $g_1$ and $h_1$ are orthogonal. Thus, if $a = ng + mh$ for $n,m \in \mathbb{Z}$, then $a = ng_1 + mh_1 + (n+m)(g\wedge h)$, which
is an orthogonal representation. A simple induction argument then shows that any $\mathbb{Z}$-linear combination of singular
elements can be rewritten into an orthogonal representation.

(2). Suppose that $a = m_1 g_1 + m_2 g_2$ with $g_1 \wedge g_2 = 0$ and $0 \le m_1, m_2$. Without loss of generality, we may assume that
$m_1 \le m_2$. Then
\[
0 \le m_1 g_1 \wedge m_2 g_2 \le m_2 g_1 \wedge m_2g_2 = m_2(g_1 \wedge g_2) = 0.
\]
Thus, $m_1 g_1 + m_2 g_2 = m_1g_1 \vee m_2g_2$. A simple induction argument then yields the result.

(3). Let $a, b \in G$. By (1), $a = \sum_{i=1}^n m_i g_i$ and $b = \sum_{j=1}^m p_j h_j$ with the $m_i \in \mathbb{Z}$ (resp.~$p_j \in \mathbb{Z}$)
and the $g_i \in S$ (resp.~$h_j\in S$) pairwise orthogonal. Set $g = \bigvee_{i=1}^n g_i$ and $h = \bigvee_{j=1}^m h_j$,
and let $u = g \vee h$. By \blu Lemma~\ref{singular unit}\blk, the interval $[0,u]$ is a Boolean algebra. Let $g'$ (resp.~$h'$) be the complement
of $g$ (resp.~$h$) in $[0,u]$. By adding $0g'$ (resp.~$0h'$) to the orthogonal representations of $a, b$, we may assume that
$g = h = u$. From this we see that $g_i = g_i \wedge (\bigvee_{j=1}^m h_j) = \bigvee_{j=1}^m (g_i \wedge h_j)$. By (2),
$g_i = \sum_{j=1}^m (g_i \wedge h_j)$ and similarly $h_j = \sum_{i=1}^n (g_i \wedge h_j)$. Consequently, $a = \sum_i m_i g_i = \sum_{i,j} m_i (g_i \wedge h_j)$ and
$b = \sum_{i,j} p_j (g_i \wedge h_j)$ are compatible orthogonal representations.

(4). Let $a = \sum_{i=1}^n m_i g_i$ with the $0 \ne g_i\in S$ pairwise orthogonal. Set $b = \sum\{ m_i g_i \mid m_i > 0\}$ and
$c = \sum \{ -m_i g_i \mid m_i < 0 \}$. Then $a = b-c$ and $b,c \ge 0$. Moreover, by (2), $b = \bigvee \{ m_ig_i \mid m_i > 0\}$
and $c = \bigvee \{ -m_ig_i \mid m_i < 0\}$. Because the $g_i$ are pairwise orthogonal, we see that $b \wedge c = 0$.
Since $b, c \ge 0$ and $a = b-c$, we conclude that $b = a^+$ and $c = a^-$ (see \cite[p.~295, Lem.~4]{Bir79}).

(5). Let $a = \sum_{i=1}^n m_i g_i$ with the $0 \ne g_i\in S$ pairwise orthogonal. Since each $g_i \ge 0$, it is clear that if each
$m_i \ge 0$, then $a \ge 0$. Conversely, suppose that $a \ge 0$. Then $a^- = 0$. Since each $g_i > 0$, if some $m_i < 0$,
then $a^- > 0$ by (4). This contradiction implies that all $m_i \ge 0$.
\qed
\end{proof}

We next show that each nonzero element of a Specker $\ell$-group has a unique orthogonal representation.
This is stated without proof in \cite{CD98}. 

\begin{lemma} \label{unique}
Let $G$ be a Specker $\ell$-group and let $S$ be the set of all singular elements of $G$.
\begin{enumerate}[$(1)$]
\item If $g,h \in S$ are nonzero and $n,m$ are positive integers, then $ng \le mh$ iff $g \le h$ and $n \le m$.
\item Every nonzero element of $G$ has a unique orthogonal representation $\sum_{i=1}^n m_ig_i$ with the $m_i$ distinct and nonzero
and the $g_i\in S$ nonzero.
\end{enumerate}
\end{lemma}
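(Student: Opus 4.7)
The ``if'' direction is immediate from $g,h\ge 0$ and $n,m>0$. For ``only if'', my plan is to realize $mh-ng$ as an orthogonal representation and apply Lemma~\ref{orthogonality}(5). Following the decomposition in the proof of Lemma~\ref{orthogonality}(1), set $e=g\wedge h$, $g_1=g-e$, $h_1=h-e$; these are pairwise orthogonal singular elements, and
\[
mh-ng \;=\; -n\,g_1 \;+\; m\,h_1 \;+\; (m-n)\,e
\]
is an orthogonal representation. Since $ng\le mh$, Lemma~\ref{orthogonality}(5) requires each coefficient paired with a nonzero singular element to be nonnegative. The coefficient $-n<0$ on $g_1$ therefore forces $g_1=0$, i.e., $g=e\le h$; since $g\ne 0$, the element $e=g$ is nonzero, so $m-n\ge 0$.

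\textbf{Part (2), existence.} Starting from the representation provided by Lemma~\ref{orthogonality}(1), discard terms with $m_i=0$ or $g_i=0$ and group the rest by coefficient value: for each $c$ appearing, the $g_i$'s with $m_i=c$ are pairwise orthogonal, so by Lemma~\ref{orthogonality}(2) their sum equals their join, which is singular by Lemma~\ref{singular unit}(2). The joins obtained for different coefficient values are pairwise orthogonal by distributivity, giving the required form.

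\textbf{Part (2), uniqueness.} Given two such representations $a=\sum_{i=1}^n m_i g_i = \sum_{j=1}^k p_j h_j$, my plan is to refine both to a common orthogonal basis as in Lemma~\ref{orthogonality}(3). Set $u=\bigvee_i g_i \vee \bigvee_j h_j$; since $[0,u]$ is a Boolean algebra, pad each list with a zero-coefficient complement (indexed $i=0$, $j=0$, with $m_0=p_0=0$) and then refine by meets to obtain
\[
0 \;=\; \sum_{i,j\ge 0} (m_i-p_j)(g_i\wedge h_j).
\]
The pivotal sub-lemma is that any orthogonal representation $\sum c_\alpha e_\alpha=0$ with each $e_\alpha\in S$ nonzero must have all $c_\alpha=0$: splitting into positive and negative coefficient parts yields $P=N$ where, by Lemma~\ref{orthogonality}(2), both $P$ and $N$ are joins of elements of the form $c\,e$ with $c\ge 1$ and $e\in S$ nonzero; orthogonality of the $e_\alpha$'s propagates to their positive multiples, so $P\wedge N=0$, forcing $P=N=0$, and since each joinand is $\ge e>0$, the index set for each side must be empty. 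Applied to our equation, this yields $m_i=p_j$ whenever $g_i\wedge h_j\ne 0$.

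\textbf{Main obstacle.} The hardest step is the bookkeeping to pass from this coincidence of coefficients to the bijection between the two representations. Since $g_i\ne 0$ for $i\ge 1$ and $g_i=\sum_{j\ge 0}(g_i\wedge h_j)$, some $j$ satisfies $g_i\wedge h_j\ne 0$; distinctness of the $p_j$'s forces this $j$ to be unique, and it cannot equal $0$ because $m_i\ne 0=p_0$. This produces $\sigma:\{1,\dots,n\}\to\{1,\dots,k\}$ with $g_i\le h_{\sigma(i)}$ and $m_i=p_{\sigma(i)}$; a symmetric construction yields an inverse $\tau$, so $\sigma$ is a bijection (in particular $n=k$), and the chain $g_i\le h_{\sigma(i)}\le g_{\tau(\sigma(i))}=g_i$ tightens to $g_i=h_{\sigma(i)}$, finishing the proof.
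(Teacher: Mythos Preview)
Your proof is correct, and both halves take a genuinely different route from the paper.

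For Part~(1), the paper argues directly: it first shows $g_1=g-(g\wedge h)$ is orthogonal to $h$, uses $g_1\le mh$ to force $g_1=0$, and then handles $n\le m$ by a separate contradiction argument invoking torsion-freeness. Your approach is cleaner: writing $mh-ng=-ng_1+mh_1+(m-n)e$ as a single orthogonal representation and invoking Lemma~\ref{orthogonality}(5) dispatches both conclusions at once, with no appeal to torsion-freeness needed.

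For Part~(2) uniqueness, the paper's argument is an induction driven by Part~(1): it identifies the maximal coefficients on both sides, uses Part~(1) to match them, peels off a term, and repeats. Your argument instead passes to a common refinement $\{g_i\wedge h_j\}$ and reduces to the linear-independence statement that an orthogonal combination of nonzero singular elements summing to zero must be trivial (which, as you note, follows from Lemma~\ref{orthogonality}(2) and the propagation of orthogonality to positive multiples; it is also immediate from Lemma~\ref{orthogonality}(5) applied to both $\pm$ of the sum). The bijection bookkeeping at the end is tidy and correct: distinctness of the $p_j$ (including the padded $p_0=0$) is exactly what makes $\sigma$ well defined, and distinctness of the $m_i$ is what makes $\tau\circ\sigma=\mathrm{id}$. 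A notable feature of your route is that uniqueness never uses Part~(1); the two parts become logically independent. The paper's approach, by contrast, ties them together but avoids introducing the auxiliary padding and the refinement machinery.
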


\begin{proof}
(1). One direction is clear. For the other, suppose that $ng \le mh$. Set $g_1 = g - (g\wedge h)$. Since $g$ is singular, $g_1 \wedge (g\wedge h) \blu = (g - g \wedge h) \wedge (g \wedge h) = 0$\blk, and since $g_1 \le g$, we have $g_1 \wedge h = 0$.
Because $g \le ng \le mh$, we see that
\[
0 \le g_1 \wedge mh \le mg_1 \wedge mh = m(g_1 \wedge h) = 0,
\]
which shows $g_1 \wedge mh = 0$. Thus, since
$g_1 \le mh$, we get $g_1 = 0$. Hence, $g = g\wedge h$, so $g \le h$. Suppose $n > m$. We have $mg + (n-m) g \le mh$, so $(n-m) g \le m(h-g)$.
From $g \le h$ and $h$ singular it follows that $g \wedge (h-g) = 0$. Therefore, \blu\cite[p.~294, Thm.~5]{Bir79} implies $g \wedge m(h-g) = 0$, which further implies that \blk $(n-m)g \wedge m(h-g) = 0$ since $n-m \ge 0$.
This forces $(n-m)g = 0$. Because $G$ is torsion-free (see, e.g., \cite[p.~294, Cor.~1]{Bir79}) and $g$ is nonzero, $n=m$.
This contradiction shows $n \le m$.

(2). Let $0 \ne a \in G$. By Lemma~\ref{orthogonality}(1), we may write $a = \sum_{i=1}^n m_ig_i$ with the $m_i\in\mathbb Z$ and the $g_i\in S$
pairwise orthogonal. We may assume that the $m_i$ and $g_i$ are nonzero.
Let $\{p_1, \dots, p_t\}$ be the distinct elements of $\{m_1, \dots, m_n\}$, and let $k_i = \bigvee \{ g_j \mid m_j = p_i\}$. We have $\sum \{m_j g_j \mid m_j = p_i \} = p_i \sum \{g_j \mid m_j = p_i\}$.  Since the $g_i$ are \blu pairwise orthogonal\blk, by Lemma~\ref{orthogonality}(2), $\sum_{j}g_j = \bigvee_j g_j = k_i$.
Thus, $a = \sum_{i=1}^t p_i k_i$ is an orthogonal representation with distinct coefficients
and the \blu $k_i$ singular \blk and nonzero.

For uniqueness, suppose that $a \in G$ can be written as $a = \sum_{i=1}^r m_ig_i = \sum_{j=1}^s n_j h_j$ with the $m_i\in\mathbb Z$ (resp.~$n_j\in\mathbb Z$)
distinct and the $g_i\in S$ (resp.~$h_j\in S$) nonzero pairwise orthogonal. By setting the positive and negative parts equal and using
Lemma~\ref{orthogonality}(4), it suffices to assume all coefficients are positive. First suppose that $r=1$. We write $mg = \sum_{j=1}^s n_j h_j$.
Relabel if necessary to assume $n_1 = \max\{n_j\}$. Then
\[
mg \le n_1(h_1 + \cdots + h_s) = n_1 (h_1 \vee \cdots \vee h_s).
\]
By (1) we get $m \le n_1$ and $g \le h_1 \vee \cdots \vee h_s$. On the other hand, since all coefficients are positive,
$mg \ge n_1h_1$, so $m \ge n_1$ by (1). Therefore, $m = n_1$ and $g \ge h_1$. Thus, $m(g-h_1) = n_2h_2 + \cdots + n_sh_s$.
If $s \ge 2$, then repeating this argument shows $m = \max\{n_2,\dots, n_s\}$. But this is impossible since $m = n_1$,
which is strictly greater than all other $n_j$. This contradiction shows $s = 1$. Then $m = n_1$ and $g = h_1$.

Next, suppose $a = \sum_{i=1}^r m_ig_i = \sum_{j=1}^s n_j h_j$ with $r \ge 2$. Relabel if necessary to assume $m_1 = \max\{m_i\}$ and
$n_1 = \max\{n_j\}$. Then
\[
m_1g_1 \le n_1(h_1 + \cdots + h_s) = n_1(h_1 \vee \cdots \vee h_s).
\]
Therefore, by (1) again,
$m_1 \le n_1$. Reversing the roles of the $g_i$ and $h_j$ yields $n_1 \le m_1$. Thus, $m_1 = n_1$. Set
$g' = g_1 - (g_1 \wedge h_1)$ and $h' = h_1 - (g_1 \wedge h_1)$. Then
\[
m_1g' + m_2g_2 + \cdots + m_rg_r = m_1h' + n_2h_2 + \cdots + n_sh_s,
\]
so $m_1g' \le m_1h' + n_2h_2 + \cdots + n_sh_s$. Since $h',h_2,\dots,h_s$ are pairwise orthogonal and the $n_i$ are positive,
by Lemma~\ref{orthogonality}(2),
\[
m_1h' + n_2h_2 + \cdots + n_sh_s = m_1h' \vee n_2h_2 \vee \cdots \vee n_sh_s.
\]
Because $m_1g' \wedge m_1h' = m_1(g' \wedge h') =  0$, it follows that
$m_1g' \le n_2 h_2 + \cdots + n_sh_s$. If $g' > 0$, then repeating part of the $r=1$ argument shows $m_1 \le \max\{n_2,\dots, n_s\}$,
which is false. Therefore, $g' = 0$, so $g \le h$. Replacing $g'$ by $h'$ yields $h \le g$, so $g = h$ and $n_1 = m_1$. Thus,
$m_2g_2 + \cdots + m_rg_r = n_2h_2 + \cdots + n_sh_s$. The result then follows by induction.
\qed
\end{proof}

We are ready to give our proof of the following representation of Specker $\ell$-groups.

\begin{theorem} \label{rep}
Let $G$ be an abelian $\ell$-group. Then $G$ is a Specker $\ell$-group iff there is a locally compact Hausdorff zero-dimensional space $X$ such that
$G$ is isomorphic to the $\ell$-group $C_k(X, \mathbb{Z})$ of all continuous $\mathbb{Z}$-valued functions on $X$ with compact support.
\end{theorem}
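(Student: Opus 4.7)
My plan is to prove the two directions separately, with Stone duality for generalized Boolean algebras driving the harder direction.

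For the easy direction, I would fix $X$ locally compact Hausdorff zero-dimensional and show $C_k(X,\mathbb{Z})$ is Specker. The key observation is that the characteristic function $\chi_U$ of any compact open set $U \subseteq X$ is a singular element: if $0 \le f \le \chi_U$ in $C_k(X,\mathbb{Z})$, then $f$ is $\{0,1\}$-valued on $U$, so $f = \chi_V$ for some clopen $V \subseteq U$, and then $\chi_V \wedge (\chi_U - \chi_V) = \chi_V \wedge \chi_{U \setminus V} = 0$. Conversely, any $f \in C_k(X,\mathbb{Z})$ has finite image; since $X$ is zero-dimensional and $f$ is continuous and $\mathbb{Z}$-valued, each $f^{-1}(n)$ is clopen, and for $n \ne 0$ it is also compact (being closed in the compact open set $\{x : f(x) \ne 0\}$). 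Hence $f = \sum_{n \ne 0} n \chi_{f^{-1}(n)}$ expresses $f$ as a $\mathbb{Z}$-linear combination of singular elements.

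For the hard direction, let $G$ be a Specker $\ell$-group with set of singular elements $S$. By Lemma~\ref{singular unit}, $S$ is a generalized Boolean algebra. Apply Stone duality for generalized Boolean algebras to obtain a locally compact Hausdorff zero-dimensional space $X$ together with a Boolean algebra isomorphism $\phi : S \to \operatorname{CpOp}(X)$, where $\operatorname{CpOp}(X)$ denotes the lattice of compact open subsets of $X$. I would then define a map $\psi : G \to C_k(X,\mathbb{Z})$ by sending an element $a \in G$ with orthogonal representation $a = \sum_{i=1}^n m_i g_i$ (guaranteed by Lemma~\ref{orthogonality}(1)) to $\psi(a) = \sum_{i=1}^n m_i \chi_{\phi(g_i)}$; because the $\phi(g_i)$ are pairwise disjoint compact opens, this sum is a well-defined element of $C_k(X,\mathbb{Z})$.

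Well-definedness of $\psi$ follows by taking two orthogonal representations of $a$ and refining them to a common one using the construction in the proof of Lemma~\ref{orthogonality}(3); one can alternatively invoke the uniqueness in Lemma~\ref{unique}(2). For additivity, given $a,b \in G$, apply Lemma~\ref{orthogonality}(3) to obtain compatible orthogonal representations $a = \sum m_i g_i$ and $b = \sum p_i g_i$, whence $a + b = \sum (m_i + p_i) g_i$ and $\psi(a+b) = \sum (m_i + p_i)\chi_{\phi(g_i)} = \psi(a) + \psi(b)$. Order preservation in both directions follows from Lemma~\ref{orthogonality}(5): $a \ge 0$ iff every $m_i \ge 0$ iff $\psi(a) \ge 0$ pointwise. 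Injectivity of $\psi$ is immediate from this (or directly from Lemma~\ref{unique}(2)), and surjectivity uses the decomposition $f = \sum_{n \ne 0} n \chi_{f^{-1}(n)}$ noted above, with each $f^{-1}(n) \in \operatorname{CpOp}(X)$ lifted back along $\phi^{-1}$ to an element of $S$.

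The main obstacle is verifying that $\psi$ preserves the lattice operations, not just the order. Since $\psi$ is an injective order isomorphism between $\ell$-groups, $\psi(a \vee b) = \psi(a) \vee \psi(b)$ and dually for $\wedge$ should be deducible from the order-preservation together with additivity (using $x \vee y = x + (y-x)^+$, say). Concretely, with compatible representations as above, one checks $a \vee b = \sum \max(m_i, p_i) g_i$ using pairwise orthogonality of the $g_i$, and this matches $\psi(a) \vee \psi(b) = \sum \max(m_i, p_i) \chi_{\phi(g_i)}$ pointwise. Once this is in place, $\psi$ is the desired $\ell$-group isomorphism and the theorem is proved.
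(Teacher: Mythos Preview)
Your proposal is correct and follows essentially the same route as the paper: both directions are handled identically, with Stone duality for the generalized Boolean algebra $S$ of singular elements producing $X$, the map defined via orthogonal representations, additivity checked using the compatible representations of Lemma~\ref{orthogonality}(3), and order preservation (hence $\ell$-isomorphism) deduced from Lemma~\ref{orthogonality}(5). Your treatment of well-definedness and of lattice preservation is slightly more explicit than the paper's, but there is no substantive difference in strategy.
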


\begin{proof}
We first show that the $\ell$-group $C_k(X, \mathbb{Z})$ of all continuous $\mathbb{Z}$-valued functions with compact support on a locally compact
Hausdorff zero-dimensional space is a Specker $\ell$-group. Let $f \in C_k(X, \mathbb{Z})$. \blu Since $f$ has compact support, $f(X)$ is a compact subset \blk of $\mathbb{Z}$, hence it is finite.
\blu Since  $f$ is \blk continuous, for each nonzero $m_i \in f(X)$, we have $f^{-1}(m_i)$ is a compact clopen subset of $X$.
From this it follows that $f = \sum_{i=1}^n m_i \chi_{U_i}$ for some nonzero integers $m_i$ and
nonempty compact clopen subsets $U_i$ of $X$.
It remains to show, for each compact clopen $U$, that $\chi_U$ is singular.
Suppose that $0 \le g \le \chi_U$. Since $g$ is $\mathbb{Z}$-valued, there is $V \subseteq U$ with $g = \chi_V$.
Then $\chi_U - g = \chi_{U \setminus V}$, so
$g \wedge (\chi_U - g) = \chi_V \wedge \chi_{U \setminus V} = \chi_{U \cap (U \setminus V)} = \chi_\varnothing = 0$.
Thus, $\chi_U$ is singular.

For the converse, by Lemma~\ref{singular unit}(3), the set $S$ of singular elements of $G$ is a generalized Boolean algebra. Let $X$ be the Stone dual of $S$.
Then $X$ is a locally compact Hausdorff zero-dimensional space \cite[Sec.~I.2]{Sto37b}. We define a map $\alpha : G \to C_k(X, \mathbb{Z})$ as follows.
If $g \in S$, let $K_g$ be the corresponding compact clopen subset of $X$. Then the characteristic function $\chi_{K_g}$
is a continuous $\mathbb{Z}$-valued function on $X$ with compact support, and so $\chi_{K_g} \in C_k(X, \mathbb{Z})$.
If $0 \ne a \in G$, by Lemma~\ref{unique}(2), let $a = m_1g_1 + \cdots + m_ng_n$ be the unique orthogonal representation of $a$, and set
$\alpha(a) = m_1\chi_{K_{g_1}} + \cdots + m_n\chi_{K_{g_n}}$. We point out that $\alpha(a)$ can be computed from any orthogonal representation
$a = m_1g_1 + \cdots + m_ng_n$.
For, if any of $m_i\in\mathbb Z$ or $g_i\in S$ is zero, then $m_ig_i=0=\alpha(m_ig_i)$. Otherwise,
the proof of Lemma~\ref{unique}(2) shows that we obtain the
unique representation of Lemma~\ref{unique}(2) by replacing a piece of the sum of the form $mg_{i_1} + \cdots + mg_{i_t}$ by $mg$,
where $g = g_{i_1} \vee \cdots \vee g_{i_t}$. But $K_g = K_{g_{i_1}} \cup \cdots \cup K_{g_{i_t}}$ by Stone duality.
Thus, $\alpha(a) = m_1\chi_{K_{g_1}} + \cdots + m_n \chi_{K_{g_n}}$.

Let $a,b \in G$. By Lemma~\ref{orthogonality}(3), we may write $a = \sum_{i=1}^n m_i g_i$ and
$b = \sum_{i=1}^n p_i g_i$ for some pairwise orthogonal set of singular elements. Since $a+b = \sum_{i=1}^n (m_i + p_i) g_i$, it then
follows that $\alpha(a+b) = \alpha(a) + \alpha(b)$, and so $\alpha$ is a group homomorphism. Furthermore, if $\alpha(a) = 0$,
then the formula for $\alpha(a)$ shows that all coefficients are $0$, and so $a=0$. To see that $\alpha$ is onto, let
$f \in C_k(X, \mathbb{Z})$. Then
$f$ is finitely valued. Let $m_1, \dots, m_n$ be the distinct nonzero values of $f$. Since $f$ has compact support,
each $K_i = f^{-1}(m_i)$ is compact clopen in $X$,
and it is easy to see that $f = m_1\chi_{K_1} + \cdots + m_n\chi_{K_n}$. By Stone duality, for each $i$ there is $g_i \in S$ with
$K_i = K_{g_i}$, so $f = \alpha(m_1g_1 + \cdots + m_ng_n)$. Therefore, $\alpha$ is a group isomorphism. Since $G$ and $C_k(X, \mathbb{Z})$ are
Specker $\ell$-groups, it follows from  Lemma~\ref{orthogonality}(5) that $a\ge 0$ iff $\alpha(a)\ge 0$. Thus, $\alpha$ is an order isomorphism, hence an isomorphism of $\ell$-groups.\qed
\end{proof}

\begin{remark}
Since $\mathbb Z$ is archimedean, it is an immediate consequence of Theorem~\ref{rep} that every Specker $\ell$-group is archimedean.
\end{remark}

\begin{corollary} \label{mult} {\em (Conrad \cite[4.7]{Con74})}
Let $G$ be a Specker $\ell$-group. Then there is a unique multiplication on $G$ which makes $G$ into a commutative ring such that
$gh = g \wedge h$ for all \blu singular elements $g,h$. \blk Consequently, singular elements are precisely the idempotents of $G$.
\end{corollary}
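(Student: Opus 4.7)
The plan is to bootstrap everything off Theorem~\ref{rep}. For existence, transport the pointwise multiplication of $C_k(X,\mathbb{Z})$ along the isomorphism $\alpha$ of Theorem~\ref{rep}. Pointwise multiplication makes $C_k(X,\mathbb{Z})$ into a commutative ring, and for two singular elements $g,h\in G$ corresponding to compact clopen sets $K_g,K_h\subseteq X$, we have $\chi_{K_g}\cdot\chi_{K_h}=\chi_{K_g\cap K_h}=\chi_{K_{g\wedge h}}$ by Stone duality, so the transported multiplication indeed satisfies $gh=g\wedge h$ on $S$.

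For uniqueness, the key observation is that the condition $gh = g \wedge h$ on singular elements, combined with commutativity, distributivity, and $\mathbb Z$-bilinearity, already pins down the product of any two elements of $G$. Given $a,b\in G$, Lemma~\ref{orthogonality}(3) produces compatible orthogonal representations $a=\sum_{i=1}^n m_ig_i$ and $b=\sum_{i=1}^n p_ig_i$ with the $g_i\in S$ pairwise orthogonal. Any commutative ring structure extending $S$'s meet multiplication must satisfy
\[
ab = \sum_{i,j} m_ip_j\, g_ig_j = \sum_{i,j} m_ip_j\,(g_i\wedge g_j) = \sum_{i=1}^n m_ip_i\, g_i,
\]
since $g_i\wedge g_j=0$ for $i\ne j$ and $g_i\wedge g_i=g_i$. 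Thus the product is forced, proving uniqueness.

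For the idempotent statement, one direction is immediate: if $g\in S$, then $g^2=g\wedge g=g$. Conversely, suppose $e\in G$ is idempotent. By Lemma~\ref{unique}(2), write $e=\sum_{i=1}^n m_ig_i$ with the $m_i$ distinct nonzero integers and the $g_i\in S$ nonzero pairwise orthogonal. The uniqueness-of-product computation above (with $a=b=e$) gives $e^2=\sum_{i=1}^n m_i^2 g_i$, which is again an orthogonal representation, though the coefficients $m_i^2$ need not be distinct. Grouping equal coefficients and applying Lemma~\ref{orthogonality}(2) to collapse each group into a single singular element (as in the proof of Lemma~\ref{unique}(2)) produces the unique representation of $e^2$; comparing with the unique representation of $e$ forces $m_i^2=m_i$, hence $m_i=1$ for all $i$. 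Therefore $e=\sum_{i=1}^n g_i=\bigvee_{i=1}^n g_i$ by Lemma~\ref{orthogonality}(2), and by induction Lemma~\ref{singular unit}(2) gives $e\in S$.

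The only delicate point is the last step of the idempotent argument, where one must compare two orthogonal representations whose coefficients are not a priori distinct; this is handled by invoking the normalization procedure already used in the proof of Lemma~\ref{unique}(2).
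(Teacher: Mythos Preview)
Your proof is correct and the existence argument is identical to the paper's: transport the pointwise multiplication of $C_k(X,\mathbb{Z})$ via Theorem~\ref{rep} and check $\chi_{K_g}\cdot\chi_{K_h}=\chi_{K_g\cap K_h}$. For uniqueness, the paper simply notes that $G$ is generated by its singular elements, so the relation $gh=g\wedge h$ determines the product; your explicit computation via compatible orthogonal representations is the same observation unpacked.

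The one genuine difference is in the idempotent characterization. The paper stays inside the function model and observes in one line that the idempotents of $C_k(X,\mathbb{Z})$ are exactly the characteristic functions of compact clopens, hence exactly the images of singular elements. You instead argue intrinsically via Lemma~\ref{unique}(2), comparing the unique orthogonal representations of $e$ and $e^2$ to force $m_i^2=m_i$. Your route is a bit longer and, as you note, requires some care when the squares $m_i^2$ collide; a quicker way to finish once the multiplication is fixed is to multiply $\sum_i(m_i^2-m_i)g_i=0$ by $g_j$ and use torsion-freeness. The paper's approach is shorter because it exploits the concrete model already in hand; yours has the virtue of being purely algebraic and not needing to revisit the function representation.
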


\begin{proof}
By Theorem~\ref{rep}, we may identify $G$ with $C_k(X, \mathbb{Z})$ for some locally compact Hausdorff zero-dimensional space $X$.
Pointwise multiplication makes $C_k(X, \mathbb{Z})$ into a commutative ring.
Since each $g \in S$ is identified with the characteristic function $\chi_{K_g}$,
if $g,h \in S$, then $\chi_{K_g} \wedge \chi_{K_h} = \chi_{K_g \cap K_h} = \chi_{K_g}\cdot \chi_{K_h}$,
and hence $gh = g \wedge h$. As idempotents of $C_k(X, \mathbb{Z})$ are characteristic functions of compact clopens,
we conclude that singular elements are precisely the idempotents.
Finally, since $G$ is generated by singular elements, the equation $gh = g \wedge h$ gives that the multiplication is unique.
\qed
\end{proof}

Let $G$ be an abelian $\ell$-group. We recall that a positive element $u\in G$ is a {\em weak order-unit} if for any $a\ge 0$,
from $a\wedge u=0$ it follows that $a=0$; and that $u$ is a {\em strong order-unit} if for each $a\in G$ there is $n\in\mathbb N$
such that $a\le nu$. The concept of a strong order-unit is in general stronger than that of a weak order-unit (see, e.g., 
\blu \cite[p.~308]{Bir79}). \blk However, the two concepts are equivalent for Specker $\ell$-groups. While this can be derived from \cite[Thm.~55.1]{Dar95},  we give a direct proof in the next lemma.

\begin{lemma} \label{BA}
Let $G$ be an abelian $\ell$-group and let $S$ be the set of singular elements of $G$. Suppose that $u \in G$ is a weak order-unit and 
$u = m_1g_1 + \cdots + m_ng_n$ is an orthogonal representation with the $m_i > 0$ and the $g_i \in S$. Let $g = g_1 \vee \cdots \vee g_n$.
\begin{enumerate}[$(1)$]
\item $g$ is the largest element of $S$ and is a strong order-unit.
\item $u$ is a strong order-unit.
\item $S$ is a Boolean algebra.
\end{enumerate}
\end{lemma}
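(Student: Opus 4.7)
The plan is to do (1) as the substantial step and then deduce (2) and (3) as short corollaries, all turning on sandwiching $u$ between $g$ and a multiple of $g$.

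First I would sandwich the unit. By Lemma~\ref{orthogonality}(2) applied to the orthogonal representation $u = m_1g_1 + \cdots + m_ng_n$ and to the $g_i$ themselves, one has $u = \bigvee_i m_i g_i$ and $g = \bigvee_i g_i = \sum_i g_i$. Writing $M = \max_i m_i$, the bounds $1 \le m_i \le M$ give $g \le u \le Mg$. This is the bridge I will use repeatedly: a weak or strong order-unit statement for $u$ will transfer to $g$ and back.

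For the main claim of (1), that $g$ is the largest singular element, I take an arbitrary $h \in S$ and set $h' = h - (h \wedge g)$. Because $h$ is singular with $0 \le h \wedge g \le h$, the singularity of $h$ yields $h' \wedge (h \wedge g) = 0$; combining $h' \wedge g \le h'$ and $h' \wedge g \le h \wedge g$ then gives $h' \wedge g \le h' \wedge (h \wedge g) = 0$. Invoking the $\ell$-group fact $x \wedge y = 0 \Rightarrow x \wedge ky = 0$ for $k \ge 0$ (already used in the proof of Lemma~\ref{unique}(1) via \cite[p.~294, Thm.~5]{Bir79}), I upgrade this to $h' \wedge Mg = 0$, and then $h' \wedge u \le h' \wedge Mg = 0$. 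Since $u$ is a weak order-unit and $h' \ge 0$, it follows that $h' = 0$, i.e., $h = h \wedge g \le g$. For the strong order-unit half of (1), I take any $a \in G$, use Lemma~\ref{orthogonality}(1) to write $a = \sum_j p_j h_j$ orthogonally, and observe by Lemma~\ref{orthogonality}(4) that $|a| = \sum_j |p_j| h_j$. Since each $h_j \le g$, this gives $a \le |a| \le Ng$ with $N = \sum_j |p_j|$.

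Part (2) is then immediate: $g \le u$ implies $Ng \le Nu$, so any $a \le Ng$ also satisfies $a \le Nu$. Part (3) combines Lemma~\ref{singular unit}(3), which already identifies $S$ as a generalized Boolean algebra, with (1), which supplies a top element $g$; by definition of a generalized Boolean algebra, $S = [0,g]$ is then a Boolean algebra.

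The main obstacle is the upper-bound step inside (1): translating the global $\ell$-group-theoretic weak order-unit property of $u$ into a statement purely about singular elements. The decomposition $h = (h \wedge g) + (h - h \wedge g)$, which exploits singularity of $h$ to produce an orthogonal splitting relative to $g$, together with the comparison $u \le Mg$, is what makes the weak order-unit hypothesis bite on $h'$.
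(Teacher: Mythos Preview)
Your proof is correct and takes essentially the same approach as the paper. The only cosmetic difference is in how the ``excess'' of an arbitrary $h\in S$ over $g$ is handled: you form $h' = h - (h\wedge g)$ and use the singularity of $h$, whereas the paper forms $(g\vee h)-g$ and uses the singularity of $g\vee h$; these are the same element by the identity $g+h=(g\vee h)+(g\wedge h)$, and both arguments then lift orthogonality with $g$ to orthogonality with $u$ (you via the sandwich $u\le Mg$, the paper via $u=\bigvee_i m_ig_i$) before invoking the weak order-unit hypothesis.
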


\begin{proof}
(1). Clearly $g\in S$ by Lemma~\ref{singular unit}(2). Let $h \in S$. Applying the lemma again yields $g \vee h\in S$,
so $g \wedge [(g\vee h) - g] = 0$. Set $a = (g \vee h) - g$. Then 
\[
0\le g_i \wedge a = g_i \wedge [(g\vee h) - g] \le g \wedge [(g\vee h) - g] = 0.
\]
Therefore, $g_i \wedge a = 0$, so $m_i g_i \wedge a = 0$ for each $i$. Thus, by Lemma~\ref{orthogonality}(2), $u \wedge a = 0$. Since $u$ is 
a weak order-unit, $a = 0$. Consequently, \blu $g\vee h = g$, \blk so $h \le g$. This proves that $g$ is the largest element of $S$. To see that it 
is a strong order-unit, let $b \in G$ and let $b = p_1h_1 + \cdots + p_rh_r$ be an orthogonal representation with the $p_i\in\mathbb Z$ and
the $h_i\in S$. If $m = \sum_{i=1}^r |p_i|$, then $b \le mg$ since each $h_i \le g$. This shows that $g$ is a strong order-unit.

(2). This is immediate from (1) since $g \le u$ as each $m_i \ge 1$.

(3). This is immediate from (1) and Lemma~\ref{singular unit}(3).
\qed
\end{proof}

Another consequence of Theorem~\ref{rep} characterizes Specker $\ell$-groups with a weak order-unit. \blu We recall that a \emph{Stone space} is a compact Hausdorff zero-dimensional space.\blk

\begin{corollary} \label{rep cor}
Let $G$ be an abelian $\ell$-group. Then $G$ is a Specker $\ell$-group with a weak order-unit iff there is a Stone space $X$ such that $G$ is
isomorphic to the $\ell$-group $C(X, \mathbb{Z})$ of all continuous $\mathbb{Z}$-valued functions on $X$.
\end{corollary}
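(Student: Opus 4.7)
The plan is to deduce this directly from Theorem~\ref{rep} together with Lemma~\ref{BA}, treating weak order-unit as the extra data that promotes ``generalized Boolean algebra of singulars'' to ``Boolean algebra'', and correspondingly ``locally compact'' to ``compact''.

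For the $(\Leftarrow)$ direction, I would first note that when $X$ is compact, every continuous function on $X$ automatically has compact support, so $C(X,\mathbb Z)=C_k(X,\mathbb Z)$, and this is a Specker $\ell$-group by Theorem~\ref{rep}. The function $1_X=\chi_X$ lies in $C(X,\mathbb Z)$ because $X$ itself is compact clopen; it is a weak order-unit because if $0\le a\in C(X,\mathbb Z)$ satisfies $a\wedge 1_X=0$, then pointwise $\min\{a(x),1\}=0$ for every $x\in X$, forcing $a\equiv 0$.

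For the $(\Rightarrow)$ direction, suppose $G$ is a Specker $\ell$-group with a weak order-unit $u$. Write $u=m_1g_1+\cdots+m_ng_n$ as an orthogonal representation with $m_i>0$ and $g_i\in S$ (Lemma~\ref{orthogonality}(1), using Lemma~\ref{orthogonality}(5) to ensure the coefficients are positive). By Lemma~\ref{BA}(1) and (3), $g:=g_1\vee\cdots\vee g_n$ is the largest element of $S$, and $S$ is a Boolean algebra. Apply Theorem~\ref{rep} to obtain a locally compact Hausdorff zero-dimensional space $X$, realized as the Stone dual of the generalized Boolean algebra $S$, together with an $\ell$-isomorphism $\alpha:G\to C_k(X,\mathbb Z)$. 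Under Stone duality, the existence of a top element in $S$ forces $X$ to equal the compact clopen $K_g$ corresponding to $g$; hence $X$ is compact, i.e., a Stone space. Then $C_k(X,\mathbb Z)=C(X,\mathbb Z)$, and $\alpha$ delivers the desired isomorphism.

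The only nontrivial point is the translation ``$S$ has a top $\Leftrightarrow$ $X$ is compact''. If one prefers to bypass the corresponding feature of Stone duality, one can verify it directly on the $\ell$-group side: under the isomorphism $\alpha$, the element $u$ is sent to $\sum_{i=1}^n m_i\chi_{K_{g_i}}$, whose support is $K_{g_1}\cup\cdots\cup K_{g_n}=K_g$; were $X$ strictly larger, local compactness and zero-dimensionality would produce a nonempty compact clopen $U\subseteq X\setminus K_g$, so that $\chi_U>0$ in $C_k(X,\mathbb Z)$ but $\chi_U\wedge\alpha(u)=0$, contradicting the fact that $\alpha^{-1}(\chi_U)\wedge u=0$ with $\alpha^{-1}(\chi_U)>0$.
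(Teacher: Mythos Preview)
Your proposal is correct and follows essentially the same route as the paper: both directions rely on Theorem~\ref{rep} together with the identification $C(X,\mathbb Z)=C_k(X,\mathbb Z)$ for compact $X$, and the forward direction invokes Lemma~\ref{BA}(3) to conclude that $S$ is a Boolean algebra so that its Stone dual is compact. Your version simply spells out a bit more (the verification that $1_X$ is a weak order-unit, and the alternative direct argument for compactness), but these are elaborations of the same argument rather than a different approach.
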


\begin{proof}
Suppose that $G$ is isomorphic to $C(X, \mathbb{Z})$ for a Stone space $X$. Since $X$ is compact, $C(X, \mathbb{Z}) = C_k(X, \mathbb{Z})$,
and so $G$ is a Specker $\ell$-group by Theorem~\ref{rep}. Moreover, the constant function 1 is a weak order-unit of $C(X, \mathbb{Z})$,
hence $G$ has a weak order-unit.
Conversely, if $G$ is a Specker $\ell$-group with a weak order-unit, then by Lemma~\ref{BA}(3), $S$ is a Boolean algebra, and
hence its dual $X$ is a Stone space. Therefore, since $X$ is compact, $C(X, \mathbb{Z}) = C_k(X, \mathbb{Z})$.
Applying Theorem~\ref{rep} again finishes the proof.
\qed
\end{proof}

\begin{remark}
Corollary~\ref{rep cor} suggests another interpretation of Specker $\ell$-groups having a weak order-unit.
Recall (see, e.g., \cite{BN80} or \cite[Sec.~IV.5]{BS81}) that if $B$ is a Boolean algebra and $A$ is an algebra of some type,
then the (bounded) \emph{Boolean power}
of $A$ by $B$ is the algebra $C(X,A)$ of the same type consisting of the continuous functions from the Stone space
$X$ of $B$ to the discrete space $A$.
If we take $A$ to be
$\mathbb{Z}$, then the Boolean power $C(X,\mathbb{Z})$ is a Specker $\ell$-group with a weak order-unit;
and by Corollary~\ref{rep cor}, the Specker $\ell$-groups with a weak order-unit can be reinterpreted
as the Boolean powers of ${\mathbb{Z}}$. \blu In Section 3 we \blk show that a similar result holds for our notion of a
Specker algebra, i.e., that the Specker $R$-algebras are precisely the Boolean powers of the commutative ring $R$.
\end{remark}

We conclude this section by showing that Specker $\ell$-groups with a weak order-unit carry all the information about Specker $\ell$-groups
in that each Specker $\ell$-group is an $\ell$-ideal of a Specker $\ell$-group with a weak order-unit (see \cite[p.~209]{Con74}). We recall that an $\ell$-ideal of
an $\ell$-group $G$ is a normal subgroup $N$ of $G$ satisfying that $a \in N$ and $|b| \le |a|$ imply $b \in N$.
It is well known (see, e.g., \cite[p.~304, Thm.~15]{Bir79}) that $\ell$-ideals are precisely the kernels of $\ell$-group homomorphisms.

\begin{theorem} \label{ideal}
Let $G$ be an abelian $\ell$-group. Then $G$ is a Specker $\ell$-group iff $G$ is isomorphic to an $\ell$-ideal in a Specker $\ell$-group
with a weak order-unit.
\end{theorem}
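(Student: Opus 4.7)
The plan is to prove the two implications separately, using Theorem~\ref{rep} and Corollary~\ref{rep cor} together with the one-point compactification for the harder direction.

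For the easier (reverse) direction, suppose $G$ is realized as an $\ell$-ideal of a Specker $\ell$-group $H$ (the weak order-unit hypothesis is not needed here). Given $a\in G$, Lemma~\ref{orthogonality}(1) applied in $H$ produces an orthogonal representation $a=\sum_{i=1}^n m_ig_i$ with $m_i\in\mathbb Z$ and $g_i$ singular in $H$. Since $0\le g_i\le |m_i|g_i\le |a|$, the $\ell$-ideal property places each $g_i$ in $G$. The singularity condition for $g_i$ involves only elements $e$ with $0\le e\le g_i$, and all such $e$ lie in $G$ by the $\ell$-ideal property, so $g_i$ remains singular in $G$. Hence $G$ is generated by its singular elements.

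For the forward direction, assume $G$ is a Specker $\ell$-group, so by Theorem~\ref{rep} we may identify $G$ with $C_k(X,\mathbb Z)$ for some locally compact Hausdorff zero-dimensional space $X$. Let $X^\ast=X\cup\{\infty\}$ denote the one-point compactification. I would first show $X^\ast$ is a Stone space. Points of $X$ retain their compact open neighborhood bases, which remain clopen in $X^\ast$. At $\infty$, a neighborhood base is given by complements of compact subsets of $X$; since every compact $K\subseteq X$ can be covered by finitely many compact open sets, $K$ sits inside a compact open set $K'$, and $X^\ast\setminus K'$ is then clopen in $X^\ast$. By Corollary~\ref{rep cor}, $C(X^\ast,\mathbb Z)$ is a Specker $\ell$-group with weak order-unit~$1$.

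Then I would define $\phi:C_k(X,\mathbb Z)\to C(X^\ast,\mathbb Z)$ by extending each $f$ by setting $\phi(f)(\infty)=0$. Continuity is immediate because $X^\ast\setminus\operatorname{supp}(f)$ is an open neighborhood of $\infty$ on which $\phi(f)$ vanishes, and $\phi$ is plainly an injective $\ell$-group homomorphism. To see that $\phi(G)$ is an $\ell$-ideal, suppose $|h|\le|\phi(f)|$ in $C(X^\ast,\mathbb Z)$. Then $h$ vanishes outside $\operatorname{supp}(\phi(f))$, a compact subset of $X$, so $h(\infty)=0$ and $h|_X$ has compact support; thus $h=\phi(h|_X)\in\phi(G)$. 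The expected main obstacle is verifying the zero-dimensionality of $X^\ast$, which rests on the lemma that in a locally compact Hausdorff zero-dimensional space every compact set is contained in a compact open one; once this is secured, the remainder is routine. Should that step prove awkward, the Banaschewski compactification provides a fallback that is automatically a Stone space.
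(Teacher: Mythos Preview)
Your proposal is correct and follows essentially the same approach as the paper: both directions use Theorem~\ref{rep} and Corollary~\ref{rep cor}, and the forward direction proceeds via the one-point compactification $X^\ast$ with the embedding $f\mapsto f\cup\{(\infty,0)\}$. The only notable difference is that the paper identifies the image of $G$ as the kernel of the evaluation $\ell$-homomorphism $C(X^\ast,\mathbb Z)\to\mathbb Z$, $f\mapsto f(\infty)$, which immediately yields the $\ell$-ideal property, whereas you verify that property directly; also, in the reverse direction the paper splits into $a^+,a^-$ while you work with $|a|$---both are fine.
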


\begin{proof}
We prove one direction by showing that an $\ell$-ideal $N$ of a Specker $\ell$-group $H$ with a weak order-unit is itself a Specker $\ell$-group.
It is clear that $N$ is an abelian $\ell$-group. Let $a \in N$. First suppose that $0 \le a$, and write $a = m_1g_1 + \cdots + m_n g_n$
with the $m_i \in \mathbb Z$ and the $g_i\in S$ pairwise orthogonal. By Lemma~\ref{orthogonality}(5), $m_i \ge 0$, so
$0 \le g_i \le a$, and hence $g_i \in N$  for each $i$. \blu If $g \in N$ is singular \blk in $H$, then it is singular in $N$,
so $a$ is a sum of singular elements of $N$. For $a$ general, applying the previous argument to $a^+$ and $a^-$ shows that each is
a sum of singular elements in $N$. Consequently, $N$ is generated by its singular elements, and hence $N$ is a Specker $\ell$-group.

For the converse, if $G$ has a weak order-unit, there is nothing to show. Suppose $G$ does not have a weak order-unit.
By Theorem~\ref{rep}, there is a locally compact Hausdorff zero-dimensional space $X$ such that $G$ is isomorphic to $C_k(X, \mathbb{Z})$.
Since $G$ does not have a weak order-unit, $X$ is not compact.
Let $Y = X \cup \{\infty\}$ be the one-point compactification of $X$. Then $Y$ is a Stone space. By Corollary~\ref{rep cor},
$C(Y, \mathbb{Z})$ is a Specker $\ell$-group with a weak order-unit. We embed $C_k(X, \mathbb{Z})$ into $C(Y, \mathbb{Z})$
by extending each $f \in C_k(X, \mathbb{Z})$ by setting $f(\infty) = 0$. This extension to $Y$ is continuous since $f^{-1}(0)$
is then the complement of a compact subset of $X$, and so is an open neighborhood of $\infty$. Under this embedding $C_k(X, \mathbb{Z})$
is sent to the kernel of the $\ell$-group homomorphism $\alpha : C(Y, \mathbb{Z}) \to \mathbb{Z}$ defined by sending $f$ to $f(\infty)$.
Thus, $G$ is isomorphic to an $\ell$-ideal of $C(Y, \mathbb{Z})$.
\qed
\end{proof}

\begin{remark}
Since $\mathbb{Z}$ is simple as an $\ell$-group, the kernel of the $\ell$-group homomorphism $\alpha : C(Y, \mathbb{Z}) \to \mathbb{Z}$ in Theorem~\ref{ideal} is a maximal $\ell$-ideal. Thus, every Specker $\ell$-group is isomorphic to a maximal $\ell$-ideal in a Specker $\ell$-group with a weak order-unit.
\end{remark}

\section{Specker algebras}
\label{Sp}

Corollary~\ref{mult} induces the structure of a commutative ring on each Specker $\ell$-group $G$ under which singular elements
are precisely the idempotents of the ring. If in addition $G$ has a weak order-unit,
then by Corollary~\ref{rep cor}, $G$ can be viewed as a commutative ring with multiplicative identity 1.
With this in mind, in this section we shift our focus from Specker $\ell$-groups to commutative rings,
since when dealing with Specker $\ell$-groups, the ring-theoretic structure is always present. \blu We leave most of the proofs out and refer the reader to \cite{BMMO15a} for details.\blk

\bigskip\noindent\textbf{Convention}. From now on all rings and algebras we consider will be assumed to be commutative, and have a multiplicative identity unless otherwise specified.\bigskip

Every Specker $\ell$-group
can be viewed as a torsion-free ${\mathbb{Z}}$-algebra \blu (possibly without 1) \blk generated by its idempotents.
This is the motivation for the notion of a Specker algebra over a ring.
However, because we wish to have a notion of Specker algebra robust enough to include cases where the base ring can be any
ring, the dependence
on the condition that the algebra be torsion-free over its base ring is problematic for rings with zero-divisors.
This can be circumvented by a closer analysis of what suffices for an idempotent-generated ${\mathbb{Z}}$-algebra $A$ to be torsion-free:
Let  $0 \ne a \in A$ and
let $a = \sum_{i =1}^n m_i e_i$ be an orthogonal representation with each $0 \ne m_i \in {\mathbb{Z}}$ and the idempotents $e_i$ nonzero.
If $k \in {\mathbb{Z}}$, then the fact that the $e_i$ are pairwise orthogonal implies that  $ka = 0$ iff $km_i e_i = 0$ for each $i$.
Thus, $A$ is torsion-free as a ${\mathbb{Z}}$-module iff each nonzero idempotent in $A$ is {\it faithful},
meaning that no nonzero element of the base ring ${\mathbb{Z}}$ annihilates it.

This suggests that for a ring $R$, the correct notion of a Specker $R$-algebra is an idempotent-generated $R$-algebra
for which each nonzero idempotent is faithful. However, this once again presents a problem for the level of generality in which
we \blu wish to work: \blk If $R$ has a nontrivial idempotent $e \ne 0,1$, then since $(1-e)e = 0$, the idempotent $e$ is not faithful.
Thus, our provisional notion of a Specker algebra rules out any choices for the base ring in which there is a nontrivial idempotent.
In other words, such a definition is only useful for indecomposable rings.

For this reason, we propose in Definition~\ref{def of Specker} a more subtle way of capturing the essential properties of the algebras
arising from Specker $\ell$-groups. We no longer require that all nontrivial idempotents be faithful, since that proves much too
restrictive. We require instead that there are ``enough'' such faithful idempotents. Even the notion of ``enough'' is more subtle
than might be expected. To capture the full strength of the Specker condition as studied in Section 2, one needs not simply that
the algebra $A$ in question is generated by faithful idempotents. Rather, in order to have a reasonable decomposition theory of
its elements along the lines of Lemma~\ref{orthogonality}, the algebra needs that these faithful idempotents occur as the nonzero
elements of a Boolean subalgebra of the Boolean algebra $\Id(A)$ of idempotents of $A$. In such a case, we say that $A$ has a
{\it generating algebra} of faithful idempotents. Among other things, this condition is of technical importance because it permits
refinements and coarsenings among faithful representations of the elements in the algebra. With this condition in place, we arrive at
our main definition.

\begin{definition} \cite[Def.~2.3]{BMMO15a}\label{def of Specker}
Let $R$ be a ring. We call an $R$-algebra $A$ a \emph{Specker $R$-algebra} if $A$ is an $R$-algebra
that has a generating algebra of faithful idempotents.
\end{definition}

\begin{example}
Let $R$ be a domain and $A$ an idempotent-generated $R$-algebra. We claim that $A$ is a Specker $R$-algebra iff $A$ is
torsion-free as an $R$-module and, if so, $\Id(A)$ is the unique generating algebra of faithful idempotents. For the first statement, if
$A$ is torsion-free, then each nonzero element is faithful, so $\Id(A)$ is a faithful generating algebra, which implies that $A$ is Specker.
Conversely, suppose that each nonzero $e \in A$ is faithful. Let $0 \ne a \in A$. As with Specker $\ell$-groups, there is an orthogonal
representation $a = r_1e_1 + \cdots + r_ne_n$ for some $0 \ne r_i \in R$ and $0 \ne e_i \in \Id(A)$ (see \cite[Lem.~2.1]{BMMO15a}).
If $r \in R$ with $ra = 0$, then multiplying the equation $ra = 0$ by $e_i$ yields
\blu $rr_ie_i = 0$. \blk Since $e_i$ is faithful, $rr_i = 0$. Because $R$ is a domain and $r_i \ne 0$, we see $r = 0$.
Thus, $A$ is torsion-free. Finally, we show that $\Id(A)$ is the only generating algebra.
Suppose that $B \subseteq \Id(A)$ is a generating algebra for $A$. Let $e \in \Id(A)$ be nonzero. There is an orthogonal representation
$e = r_1b_1 + \cdots + r_nb_n$ for some $0\ne r_i \in R$ and $0\ne b_i \in B$. Therefore, $e = e^2 = r_1^2 b_1 + \cdots + r_n^2 b_n$, and so
$(r_1^2-r_1)b_1 + \cdots + (r_n^2 - r_n)b_n = 0$. Multiplying by $b_i$ yields $(r_i^2 - r_i)b_i = 0$, which implies $r_i^2 = r_i$ \blu since $b_i$ is faithful. \blk
Since $R$ is a domain, $r_i = 1$. Consequently, $e$ is the sum of the $b_i$, and so $e$ is the join of these $b_i$. Thus, $e \in B$.
This proves $B = \Id(A)$, and so $\Id(A)$ is the unique generating algebra of faithful idempotents for $A$.
\end{example}

\blu A natural \blk question arising from Definition~\ref{def of Specker} is whether  each Specker $R$-algebra
has a unique generating algebra of faitfhful idempotents. While
this is the case iff $R$ is indecomposable (see Theorem~\ref{ind}),
it is of note that any two generating algebras of faithful idempotents are isomorphic as Boolean algebras \cite[Thm.~3.5]{BMMO15a}.

This and many other properties of Specker $R$-algebras follow from a representation theorem for Specker $R$-algebras that has its roots in
the work of Bergman \cite{Ber72} and Rota \cite{Rot73}. The goal of the representation is to encode  the intuition that a Specker
$R$-algebra is comprised of two pieces of data, the base ring $R$ and a Boolean algebra $B$ that is (isomorphic to) \blu a  generating algebra \blk
of faithful idempotents. This is done via a polynomial construction which we outline next.

\begin{definition}\cite[Def.~2.4]{BMMO15a}\label{def:R[B]}
Let $B$ be a Boolean algebra. We denote by $R[B]$ the quotient ring $R[\{x_e \mid e\in B\}]/I_B$ of the polynomial ring over $R$ in
variables indexed by the elements of $B$ modulo the ideal $I_B$ generated by the following elements, as $e,f$ range over $B$:
\[
x_{e\wedge f} - x_e x_f, \ \ x_{e\vee f} - (x_e + x_f - x_e x_f), \ \ x_{\lnot e} - (1-x_e), \ \ x_0.
\]
\end{definition}

This construction does indeed produce a Specker $R$-algebra:

\begin{proposition} \emph{\cite[Lem.~2.6, Thm.~2.7]{BMMO15a}}
Let $R$ be a ring and let  $B$ be a Boolean algebra. Then $R[B]$ is a Specker $R$-algebra such that, in the notation of
Definition~\emph{\ref{def:R[B]}}, $\{x_e+I_B\mid e \in B\}$ is a generating algebra of faithful idempotents that is isomorphic as a Boolean algebra to $B$ \blu via the map $i_B$ sending $e \in B$ to $x_e + I_B$. \blk
\end{proposition}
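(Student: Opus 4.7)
The plan is to verify three properties in turn: that $E := \{x_e + I_B \mid e \in B\}$ is a Boolean subalgebra of $\Id(R[B])$ that generates $R[B]$ as an $R$-algebra; that each nonzero element of $E$ is faithful; and that the map $i_B \colon e \mapsto x_e + I_B$ is an isomorphism of Boolean algebras between $B$ and $E$. The formal skeleton is a direct consequence of the defining relations of $I_B$, which yield the identities
\[
x_e x_f = x_{e\wedge f}, \quad x_e + x_f - x_e x_f = x_{e\vee f}, \quad 1 - x_e = x_{\neg e}, \quad x_0 = 0
\]
in $R[B]$. Setting $e = f$ in the first gives $x_e^2 = x_e$, so each coset $x_e + I_B$ is an idempotent, and (noting $x_1 = x_{\neg 0} = 1 - 0 = 1$) these identities show that $E$ is closed under the Boolean operations $a \wedge b = ab$, $a \vee b = a + b - ab$, $\neg a = 1 - a$ of $\Id(R[B])$ and contains $0$ and $1$. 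Hence $E$ is a Boolean subalgebra of $\Id(R[B])$, $i_B$ is a surjective Boolean algebra homomorphism onto $E$, and $E$ generates $R[B]$ as an $R$-algebra since the indeterminates $x_e$ generate the ambient polynomial ring.

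The main obstacle is showing that each nonzero element of $E$ is faithful in $R[B]$; injectivity of $i_B$ will then follow at once. To tackle faithfulness I would construct a family of evaluation homomorphisms indexed by the ultrafilters of $B$. For each ultrafilter $\mathcal{F}$ of $B$, let $\phi_\mathcal{F} \colon R[\{x_e \mid e \in B\}] \to R$ be the unique $R$-algebra map sending $x_e \mapsto 1$ if $e \in \mathcal{F}$ and $x_e \mapsto 0$ otherwise. A routine case-check, using the four key properties of an ultrafilter (closure under meet; $e \vee f \in \mathcal{F}$ iff $e \in \mathcal{F}$ or $f \in \mathcal{F}$; exactly one of $e$ and $\neg e$ lies in $\mathcal{F}$; and $0 \notin \mathcal{F}$), shows that $\phi_\mathcal{F}$ annihilates every generator of $I_B$. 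Therefore $\phi_\mathcal{F}$ factors through an $R$-algebra homomorphism $\bar\phi_\mathcal{F} \colon R[B] \to R$ with $\bar\phi_\mathcal{F}(x_e + I_B) = 1$ precisely when $e \in \mathcal{F}$.

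The proposition then follows readily. The case $R = 0$ is vacuous, so assume $R \neq 0$. Given $0 \neq e \in B$, the Boolean ultrafilter theorem furnishes an ultrafilter $\mathcal{F}$ of $B$ with $e \in \mathcal{F}$; for any $r \in R$ satisfying $r(x_e + I_B) = 0$, applying $\bar\phi_\mathcal{F}$ yields $r = r \cdot 1 = 0$, so $x_e + I_B$ is faithful. Specializing $r = 1$ in this argument also shows $x_e + I_B \neq 0$ whenever $e \neq 0$, so the Boolean algebra homomorphism $i_B$ has trivial kernel and is therefore injective. Combined with the surjectivity noted earlier, this gives the required Boolean algebra isomorphism $i_B \colon B \to E$, completing the proof.
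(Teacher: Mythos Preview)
Your argument is correct. The defining relations immediately give that $E$ is a Boolean subalgebra of $\Id(R[B])$ generating $R[B]$, and your ultrafilter evaluation maps $\bar\phi_{\mathcal F}:R[B]\to R$ are exactly the right tool for faithfulness and injectivity of $i_B$; the case-check that each $\phi_{\mathcal F}$ kills the generators of $I_B$ is routine and goes through as you indicate.

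As to comparison: the paper does not give a proof of this proposition, but simply cites \cite[Lem.~2.6, Thm.~2.7]{BMMO15a}. Your approach is the natural one and is essentially what lies behind the equivalence with $C(X,R_{\func{disc}})$ in Theorem~\ref{main}: the ultrafilters of $B$ are the points of the Stone space $X$, and your maps $\bar\phi_{\mathcal F}$ are precisely the point evaluations $f\mapsto f(\mathcal F)$ under the identification $R[B]\cong C(X,R_{\func{disc}})$. So while I cannot compare against an in-paper proof, your argument is in the same spirit as the Stone-dual representation the paper invokes immediately afterward.
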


In fact, every Specker $R$-algebra arises in this fashion, as we point out next. A useful way of seeing this is to note first that the
following universal mapping property is an easy consequence of the definition of $R[B]$.

\begin{lemma} \emph{\cite[Lem.~2.5]{BMMO15a}\label{UMP}}
Let $A$ be an $R$-algebra. If $B$ is a Boolean algebra and $\sigma : B \to \Id(A)$ is a Boolean homomorphism, then there
is a unique $R$-algebra homomorphism $\alpha : R[B] \to A$ satisfying $\alpha \circ i_B = \sigma$.
\end{lemma}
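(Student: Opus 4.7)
The plan is to build $\alpha$ by factoring through the polynomial ring and then verifying that the defining relations of $I_B$ are sent to zero. First I would apply the standard universal property of the polynomial ring $R[\{x_e \mid e\in B\}]$: the assignment $x_e \mapsto \sigma(e)$ extends uniquely to an $R$-algebra homomorphism
\[
\widetilde{\alpha}\colon R[\{x_e \mid e\in B\}] \longrightarrow A.
\]
If I can show $\widetilde{\alpha}(I_B) = 0$, then $\widetilde{\alpha}$ descends to the desired $\alpha\colon R[B] \to A$, automatically satisfying $\alpha(x_e + I_B) = \sigma(e)$, i.e., $\alpha \circ i_B = \sigma$.

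The key step is verifying that each of the four families of generators of $I_B$ is killed by $\widetilde{\alpha}$. This rests on the well-known fact that $\func{Id}(A)$ is a Boolean algebra under $e\wedge f = ef$, $e\vee f = e+f-ef$, $\lnot e = 1-e$, with bottom $0$. Since $\sigma\colon B\to \func{Id}(A)$ is a Boolean homomorphism, it respects each of these operations, which translates directly into
\[
\widetilde{\alpha}(x_{e\wedge f} - x_e x_f) = \sigma(e\wedge f) - \sigma(e)\sigma(f) = 0,
\]
\[
\widetilde{\alpha}\bigl(x_{e\vee f} - (x_e + x_f - x_e x_f)\bigr) = \sigma(e\vee f) - \bigl(\sigma(e) + \sigma(f) - \sigma(e)\sigma(f)\bigr) = 0,
\]
\[
\widetilde{\alpha}\bigl(x_{\lnot e} - (1-x_e)\bigr) = \sigma(\lnot e) - (1-\sigma(e)) = 0,
\]
and $\widetilde{\alpha}(x_0) = \sigma(0) = 0$. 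Hence $I_B \subseteq \ker\widetilde{\alpha}$, and the factorization exists.

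For uniqueness, I would observe that $R[B]$ is generated as an $R$-algebra by the set $\{i_B(e) \mid e \in B\} = \{x_e + I_B \mid e \in B\}$ (since these images of the polynomial generators clearly generate the quotient). Any $R$-algebra homomorphism $\alpha\colon R[B]\to A$ satisfying $\alpha\circ i_B = \sigma$ is therefore prescribed on a generating set, and two such homomorphisms must coincide. The only real ``obstacle'' here is the bookkeeping in the relation-checking step, and even that is routine once one recalls the Boolean algebra structure on $\func{Id}(A)$; the argument is essentially just the standard recipe for deducing a universal property from a presentation by generators and relations.
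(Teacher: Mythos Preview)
Your argument is correct and is exactly the standard way to establish a universal property from a presentation by generators and relations. The paper itself does not give a proof of this lemma; it simply cites \cite[Lem.~2.5]{BMMO15a} and remarks that the universal mapping property ``is an easy consequence of the definition of $R[B]$,'' so there is no alternative approach to compare against---your proof is precisely the routine verification the paper elides.
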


If $A$ is a Specker $R$-algebra and $B$ is a generating algebra of faithful idempotents, Lemma~\ref{UMP} implies there is a unique
$R$-algebra homomorphism $\alpha:R[B] \to A$ induced by the inclusion $B\to\Id(A)$.
The map $\alpha$ is onto because the idempotents in $B$ generate $A$ as an
$R$-algebra. That it is one-to-one is a consequence of the fact that each nonzero element of $A$ can be decomposed into an
$R$-linear combination of faithful idempotents from the generating algebra (see \cite[Lem.~2.1]{BMMO15a} for more details). Therefore,
every Specker $R$-algebra is isomorphic to an $R$-algebra of the form $R[B]$.

\begin{remark}
Each idempotent-generated $R$-algebra $A$ is isomorphic to a quotient of a Specker $R$-algebra. To see this, let $B = \Id(A)$. By Lemma~\ref{UMP}, the identity function $B \to B$ induces an $R$-algebra homomorphism $\alpha : R[B] \to A$, sending $x_b + I_B$ to $b$. This map is onto since $A$ is generated by $B$. Therefore, $A$ is isomorphic to $R[B]/\ker(\alpha)$.
\end{remark}

These ideas can be carried further, as is done in \cite{BMMO15a}, to show that the interpretation of Specker $\ell$-groups from Section 2
involving Boolean powers can be proved in our setting also. In particular, a Specker $R$-algebra can be viewed as the ring of continuous
functions from a Stone space $X$ to the ring $R$ with the discrete topology (which we denote by $R_{\func{disc}}$). Putting all this together,
we have:

\begin{theorem} \label{main} {\em \cite[Thm.~2.7]{BMMO15a}}
Let $A$ be an $R$-algebra. The following are equivalent.
\begin{enumerate}[$(1)$]
\item $A$ is a Specker $R$-algebra.
\item $A$ is isomorphic to $R[B]$ for some Boolean algebra $B$.
\item $A$ is isomorphic to $C(X,R_{\func{disc}})$ for some Stone space $X$.
\item $A$ is isomorphic to a Boolean power of $R$.
\end{enumerate}
\end{theorem}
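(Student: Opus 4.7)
The plan is to establish the cycle (1) $\Rightarrow$ (2) $\Rightarrow$ (3) $\Rightarrow$ (4) $\Rightarrow$ (1), leveraging the groundwork already laid before the statement. The implication (2) $\Rightarrow$ (1) is the cited proposition, and the paragraphs after Lemma~\ref{UMP} sketch (1) $\Rightarrow$ (2): given a Specker $R$-algebra $A$ with generating algebra $B$ of faithful idempotents, Lemma~\ref{UMP} produces a surjective $R$-algebra map $\alpha : R[B] \to A$. For injectivity I would write each element of $R[B]$ as $\sum r_i (x_{e_i}+I_B)$ with the $e_i$ pairwise orthogonal in $B$ (the ring-theoretic analog of Lemma~\ref{orthogonality}(1) cited in \cite[Lem.~2.1]{BMMO15a}); if $\sum r_i e_i = 0$ in $A$, multiplying by $e_j$ and using orthogonality and the faithfulness of $e_j$ forces each $r_j = 0$.

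For (2) $\Rightarrow$ (3), I would let $X$ be the Stone dual of $B$ and write $K_e \subseteq X$ for the clopen corresponding to $e \in B$. The assignment $e \mapsto \chi_{K_e}$ is a Boolean homomorphism from $B$ into $\func{Id}(C(X,R_{\func{disc}}))$, so Lemma~\ref{UMP} produces an induced $R$-algebra map $\varphi : R[B] \to C(X,R_{\func{disc}})$ sending $x_e + I_B$ to $\chi_{K_e}$. Surjectivity hinges on the observation that any $f \in C(X,R_{\func{disc}})$ has finite image: $X$ is compact and $R_{\func{disc}}$ is discrete, so the fibers $f^{-1}(r)$ partition $X$ into finitely many clopens, and writing $f = \sum r_i \chi_{K_{e_i}}$ exhibits $f$ as $\varphi$ applied to $\sum r_i x_{e_i} + I_B$. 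Injectivity is proved by the same orthogonal-decomposition argument as in (1) $\Rightarrow$ (2), now using that each $\chi_{K_e}$ with $e \neq 0$ is a faithful idempotent of $C(X,R_{\func{disc}})$.

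The implication (3) $\Rightarrow$ (4) is immediate from the definition of Boolean power recorded in the remark after Corollary~\ref{rep cor}: the Boolean power of $R$ by $B$ is precisely $C(X_B, R_{\func{disc}})$, where $X_B$ is the Stone dual of $B$. For (4) $\Rightarrow$ (1), suppose $A = C(X,R_{\func{disc}})$ for a Stone space $X$. The characteristic functions $\{\chi_U : U \subseteq X \text{ clopen}\}$ form a Boolean subalgebra of $\func{Id}(A)$ which, by the finite-image argument just used, generates $A$ as an $R$-algebra. Each such $\chi_U$ with $U \neq \varnothing$ is faithful because $r\chi_U = 0$ forces $r$ to vanish on the nonempty set $U$, hence $r = 0$; so $A$ is a Specker $R$-algebra.

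The main obstacle is the orthogonal decomposition $a = \sum r_i e_i$ that underlies both injectivity arguments: it is the ring-theoretic analog of Lemma~\ref{orthogonality}(1) and must be proved in the general setting of Definition~\ref{def of Specker}, where $R$ may have zero-divisors. The delicate point is to work entirely within the \emph{generating} Boolean algebra $B$ of faithful idempotents rather than with $\func{Id}(A)$, since the coefficient comparison $r_j e_j = 0 \Rightarrow r_j = 0$ requires faithfulness of $e_j$, and it is precisely this faithfulness condition, built into Definition~\ref{def of Specker}, that allows the classical Specker-group arguments to extend to arbitrary base rings.
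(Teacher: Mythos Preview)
Your proposal is correct and follows essentially the same approach as the paper. The paper itself only sketches $(1)\Leftrightarrow(2)$ in the paragraphs preceding the theorem (exactly as you reproduce it, via Lemma~\ref{UMP} and the orthogonal decomposition of \cite[Lem.~2.1]{BMMO15a}) and defers the remaining equivalences to \cite{BMMO15a}; your treatment of $(2)\Rightarrow(3)\Rightarrow(4)\Rightarrow(1)$ supplies precisely the natural details one would expect, and your identification of the orthogonal decomposition within the generating Boolean algebra as the crux is on target.
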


\begin{remark}
It is possible to generalize Boolean powers of totally ordered domains so that instead of working with Stone spaces one works in the generality
of compact Hausdorff spaces. For this we need to generalize Stone duality for Boolean algebras. While there are a number of such generalizations,
probably the closest in spirit to Stone duality is de Vries duality for compact Hausdorff spaces \cite{deV62}. This requires adding a binary
relation of proximity to the signature of Boolean algebras. The resulting structures are known as de Vries algebras \cite{Bez10}. Boolean powers
are generalized to de Vries powers in \cite{BMMO15b}.  The main idea\blu , modulo many technical details, \blk is to replace the finitely valued continuous functions on a Stone space
by the finitely valued normal functions on a compact Hausdorff space. It is then possible to extend the proximity on $B$ to that on $R[B]$,
and prove that the resulting category of ``proximity Specker $R$-algebras" is dually equivalent to the category of compact Hausdorff spaces
\cite[Thm.~8.6]{BMMO15b}.
\end{remark}

As the nature of the construction of $R[B]$ and its universal mapping property  suggest, there is a a functor
$\mathcal{S} : \mathbf{BA} \to \mathbf{Sp}_R$ from the category ${\bf BA}$ of Boolean algebras and Boolean homomorphisms
to the category ${\bf Sp}_R$ of Specker $R$-algebras and unital $R$-algebra homomorphisms, defined on objects by
$\mathcal{S}(B) = R[B]$; and for maps, if $\sigma : B \to C$ is a Boolean homomorphism, then $\mathcal{S}(\sigma)$ is the unique
$R$-algebra homomorphism $R[B] \to R[C]$ extending $\sigma$.

There is also the obvious functor in the other direction, i.e., the functor $\mathcal{I} : \mathbf{Sp}_R \to \mathbf{BA}$ defined
on objects by $\mathcal{I}(A) = \Id(A)$; and for maps, if $\alpha : A \to A'$ is a unital $R$-algebra homomorphism, then
$\mathcal{I}(\alpha) = \alpha|_{\Id(A)}$.

These functors form an adjunction \cite[Lem.~3.7]{BMMO15a}, but not in general an equivalence since the full algebra of idempotents of a
Specker $R$-algebra may not be faithful. It is of interest to know when this adjunction is an equivalence of categories, since in
such a case the category of Specker $R$-algebras is simply the category of Boolean algebras in another guise. Not surprisingly,
the presence of nontrivial idempotents in $R$ is precisely the obstruction to equivalence:

\begin{theorem}\emph{\cite[Thm.~3.8]{BMMO15a}} \label{ind}
The following are equivalent for a ring $R$.
\begin{enumerate}[$(1)$]
\item $R$ is indecomposable.
\item The functors $\mathcal{I}$ and $\mathcal{S}$ yield an equivalence of $\mathbf{Sp}_R$ and $\mathbf{BA}$.
\item The Specker $R$-algebras are the idempotent-generated $R$-algebras for which
each nonzero idempotent is faithful.
\end{enumerate}
\end{theorem}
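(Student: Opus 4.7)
My plan is to prove the implications cyclically, $(1) \Rightarrow (2) \Rightarrow (3) \Rightarrow (1)$, using the representation $A \cong C(X,R_{\func{disc}})$ from Theorem~\ref{main} to translate the indecomposability of $R$ into a topological statement about idempotent-valued continuous functions.

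For $(1) \Rightarrow (2)$, I aim to show that both the unit $\eta_B : B \to \Id(R[B])$ and the counit $\epsilon_A : R[\Id(A)] \to A$ of the adjunction $\mathcal{S} \dashv \mathcal{I}$ are isomorphisms. The key observation is that if $R$ is indecomposable and $f \in C(X,R_{\func{disc}})$ satisfies $f^2 = f$, then $f(x)$ is an idempotent of $R$ for each $x \in X$, forcing $f(X) \subseteq \{0,1\}$; hence $f = \chi_U$ for some clopen $U \subseteq X$, and $f$ is faithful whenever $U \ne \varnothing$. Applied to a Specker $R$-algebra $A \cong C(X,R_{\func{disc}})$, this shows that every nonzero idempotent of $A$ is faithful, so $\Id(A)$ itself is a generating algebra of faithful idempotents; the argument sketched after Lemma~\ref{UMP} then yields that $\epsilon_A$ is an isomorphism. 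Applied with $A = R[B]$ instead, the same observation identifies $\Id(R[B])$ with the clopens of the Stone dual of $B$, which by Stone duality is $B$, and the resulting isomorphism is precisely $\eta_B$.

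For $(2) \Rightarrow (3)$, the equivalence forces $\epsilon_A : R[\Id(A)] \to A$ to be an isomorphism for every Specker $R$-algebra $A$. Since $R[\Id(A)]$ admits $\{x_b + I_{\Id(A)} \mid b \in \Id(A)\}$ as a generating algebra of faithful idempotents, transporting this across $\epsilon_A$ shows that every nonzero element of $\Id(A)$ is faithful. The reverse containment in $(3)$ is tautological: any idempotent-generated $R$-algebra in which every nonzero idempotent is faithful has $\Id(A)$ itself as a generating algebra of faithful idempotents, so is Specker by Definition~\ref{def of Specker}.

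For $(3) \Rightarrow (1)$, if $R$ had a nontrivial idempotent $e \ne 0,1$, then $R$ itself would be a Specker $R$-algebra with generating algebra $\{0,1\}$ of faithful idempotents, yet $e \in \Id(R)$ would be nonzero while $(1-e)\cdot e = 0$ with $1-e \ne 0$, so $e$ would fail to be faithful, contradicting $(3)$. I expect the first implication to be the main obstacle: it requires controlling \emph{all} idempotents of $R[B]$ rather than only those in a chosen generating algebra, and it is the topological description $R[B] \cong C(X,R_{\func{disc}})$ that makes this computation tractable under indecomposability of $R$.
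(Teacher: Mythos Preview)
The paper does not give its own proof of this theorem: it is stated with the citation \cite[Thm.~3.8]{BMMO15a} and no argument, in keeping with the convention announced at the start of Section~\ref{Sp} that most proofs are omitted and the reader is referred to \cite{BMMO15a}. So there is nothing in the present paper to compare your argument against.

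That said, your proof is correct and well organized. The key move in $(1)\Rightarrow(2)$---using the identification $A\cong C(X,R_{\func{disc}})$ from Theorem~\ref{main} to conclude that when $R$ is indecomposable every idempotent of $A$ is a characteristic function of a clopen, hence faithful if nonzero---is exactly the right idea, and it immediately yields that $\Id(A)$ is itself a generating algebra of faithful idempotents, so the discussion following Lemma~\ref{UMP} gives the counit isomorphism. Your treatment of the unit is also fine: under $R[B]\cong C(X,R_{\func{disc}})$ with $X$ the Stone space of $B$, the idempotents are precisely the characteristic functions of clopens, and Stone duality identifies these with $B$ via $i_B$. The implications $(2)\Rightarrow(3)$ and $(3)\Rightarrow(1)$ are straightforward, and your arguments for them are accurate; in particular, the observation that $R$ is always a Specker $R$-algebra via the trivial generating algebra $\{0,1\}$ is the clean way to get the contrapositive of $(3)\Rightarrow(1)$.
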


\begin{remark} \label{rem: locally Specker}
If $R$ is not indecomposable, then $\mathbf{Sp}_R$ and $\mathbf{BA}$ are no longer equivalent because the functor $\mathcal{I}$ does not take into account the Boolean action $i : \Id(R) \to \Id(A)$.  Let $R$ be an arbitrary ring. By adding an additional class of generators, $ex_1 - x_{i(e)}$ for $e \in \Id(R)$ to the ideal $I_B$ in the definition of $R[B]$, we obtain an algebra $R\langle B\rangle$ which we called a \emph{locally Specker algebra} in \cite{BMO18}. The name is motivated by the fact that if $M$ is a maximal ideal of $\Id(R)$, then $R\langle B\rangle/MR\langle B\rangle$ is a Specker $R/MR$-algebra, hence $R\langle B\rangle$ is ``locally'' a Specker algebra at $M$. By \cite[Thm.~4.4]{BMO18} the category of locally Specker $R$-algebras is equivalent to the category of Boolean algebras with an $\Id(R)$-action, thus providing a refinement of Theorem~\ref{ind}.
\end{remark}

\begin{remark} \label{rem: sheaves}
It is an immediate consequence of Theorem~\ref{ind} and Stone duality that for $R$ indecomposable, the category $\mathbf{Sp}_R$ is dually equivalent to the category $\textbf{Stone}$ of Stone spaces and continuous maps. If $R$ is not indecomposable, then this duality no longer holds, but can be refined by a duality between the category of locally Specker $R$-algebras and the category of Stone bundles \cite[Prop.~4.7(2)]{BMO18}.
\end{remark}

\begin{remark}
By Pierce duality \cite{Pie67}, each ring $R$ can be represented as the ring of global sections of a sheaf of indecomposable rings over the Stone space $X$ of $\Id(R)$. In terms of Pierce duality, all locally Specker $R$-algebras arise in the following way: If $\overline{R}$ is the Pierce sheaf of $R$ over $X$ and $f : Y \to X$ is a Stone bundle, then the pullback \blu $f^{-1}\left(\overline{R}\right)$ \blk of $\overline{R}$ is a sheaf of rings over $Y$, and its ring of global sections is a locally Specker $R$-algebra \cite[Thm.~3.3]{BMO18}.
\end{remark}

\section{Classical cases: Specker ${\mathbb{Z}}$-algebras and  Specker $ {\mathbb{R}}$-algebras}

In this section we focus on Specker $R$-algebras in the special cases $R = {\mathbb{Z}}$ and $R = {\mathbb{R}}$. In both of these cases,
Theorem~\ref{ind} implies that the categories ${\bf Sp}_R$ of Specker $R$-algebras and {\bf BA} of Boolean algebras are equivalent, and hence ${\bf Sp}_R$ is dually equivalent to \blu the category of Stone spaces. \blk So in both cases a Specker $R$-algebra is completely determined by its Boolean algebra of idempotents. Not surprisingly, this Boolean algebra is at the heart of the applications we discuss in this section.

As we indicate in Subsection~\ref{sec:Bergman},  the theory of Specker $\ell$-groups can be recast in the language of rings,
and in particular, the Specker $\ell$-groups with a weak order-unit are precisely the Specker ${\mathbb{Z}}$-algebras. Following Bergman
\cite{Ber72}, we use this insight to recover N\"{o}beling's freeness result discussed in Section 2. In Subsection 4.2, we work instead with
Specker ${\mathbb{R}}$-algebras and show that these can be viewed as the finitely valued continuous real-valued functions on a Stone space.
More generally, we discuss the role that Specker ${\mathbb{R}}$-algebras play in the context of rings of continuous real-valued functions.

\subsection{Specker $\mathbb{Z}$-algebras} \label{sec:Bergman}

With the aim of connecting the material on Specker $\ell$-groups to Specker ${\mathbb{Z}}$-algebras, we first give a characterization of
Specker $\ell$-groups in the language of rings.

\begin{theorem} \label{ig}
A group $G$ admits a lattice order making it a Specker $\ell$-group iff $G$ is isomorphic to the additive group of a torsion-free
$\mathbb{Z}$-algebra \blu \emph{(}possibly without 1\emph{)} \blk which is generated by idempotents.
\end{theorem}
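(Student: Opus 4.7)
The plan is to handle the two directions separately. For the forward direction, suppose $G$ is a Specker $\ell$-group. Corollary~\ref{mult} equips $G$ with a commutative ring multiplication under which the singular elements coincide with the idempotents, and since $G$ is generated as an additive group by its singular elements, it is generated as a $\mathbb{Z}$-algebra (without assuming $1$) by its idempotents. Torsion-freeness of the additive group is a standard fact about abelian $\ell$-groups (cited in the proof of Lemma~\ref{unique}), or alternatively may be read off from the representation $G \cong C_k(X,\mathbb{Z})$ of Theorem~\ref{rep}.

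For the converse, suppose $A$ is a torsion-free $\mathbb{Z}$-algebra (possibly without $1$) generated by idempotents. The first step is to verify that $S := \Id(A)$ is a generalized Boolean algebra under $e \wedge f := ef$ and $e \vee f := e + f - ef$, with bottom element $0$ and relative complement $e - f$ on the interval $[0,e]$; these operations are meaningful in $A$ without needing a unit. Torsion-freeness implies each nonzero element of $S$ is faithful: if $ne = 0$ with $0 \neq e \in S$ and $n \in \mathbb{Z}$, then $n = 0$. Let $X$ be the Stone dual of $S$, so that $X$ is a locally compact Hausdorff zero-dimensional space and $S$ is identified with the generalized Boolean algebra of compact clopens $K_e$ of $X$.

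The central step is to construct a $\mathbb{Z}$-algebra isomorphism $\alpha : A \to C_k(X, \mathbb{Z})$ by sending each $e \in S$ to $\chi_{K_e}$ and extending $\mathbb{Z}$-linearly, and then to pull back the pointwise lattice order to make the additive group of $A$ into a Specker $\ell$-group (via Theorem~\ref{rep}). The main obstacle is to show $\alpha$ is well-defined and bijective, which requires rebuilding the orthogonal decomposition machinery of Lemma~\ref{orthogonality}(1, 3) in the purely ring-theoretic setting. Using the identity $e = (e - ef) + ef$ one shows each element of $A$ has a representation $a = \sum m_i e_i$ with pairwise orthogonal $e_i \in S$; multiplying such a relation by $e_j$ together with faithfulness of nonzero idempotents forces $m_j = 0$ whenever $a = 0$, which simultaneously gives well-definedness of $\alpha$ and injectivity. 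Multiplicativity follows from $\chi_{K_e}\chi_{K_f} = \chi_{K_{ef}}$ and linear extension, and surjectivity is routine since every $f \in C_k(X, \mathbb{Z})$ has finite image on its compact support and is therefore a finite $\mathbb{Z}$-linear combination of characteristic functions of compact clopens.
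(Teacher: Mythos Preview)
Your proof is correct, but both directions take a different route from the paper.

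For the forward direction, the paper embeds $G$ as an $\ell$-ideal in a Specker $\ell$-group $G'$ with weak order-unit (Theorem~\ref{ideal}), identifies $G' \cong C(X,\mathbb{Z})$ via Corollary~\ref{rep cor}, and observes that $G$ sits inside as a subalgebra generated by characteristic functions. Your use of Corollary~\ref{mult} is more direct: the multiplication is already intrinsic to $G$, so there is no need to pass through an ambient group.

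For the converse, the difference is more substantial. You work directly with the generalized Boolean algebra $\Id(A)$, take its locally compact Stone dual $X$, and build an explicit isomorphism $A \cong C_k(X,\mathbb{Z})$ by replaying the orthogonal-decomposition arguments of Lemmas~\ref{orthogonality} and~\ref{unique} in the ring-theoretic setting; Theorem~\ref{rep} then finishes. The paper instead adjoins a unit: it forms the unitization $A' = A \times \mathbb{Z}$, checks that $A'$ is torsion-free and idempotent-generated, invokes the Section~3 machinery (\cite[Thm.~4.1]{BMMO15a} and Theorem~\ref{main}) to get $A' \cong C(Y,\mathbb{Z})$ for a Stone space $Y$, and then identifies $A$ with the $\ell$-ideal $\{f : f(y)=0\}$ for the point $y$ corresponding to the maximal ideal $\Id(A) \subseteq \Id(A')$, finishing via Theorem~\ref{ideal}. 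Your approach is more self-contained and avoids the external citation; the paper's approach reuses the unital theory already developed and thereby sidesteps having to redo the decomposition lemmas for non-unital rings.
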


\begin{proof}
Let $G$ be a Specker $\ell$-group. By Theorem~\ref{ideal}, $G$ is isomorphic to an $\ell$-ideal of a Specker $\ell$-group $G'$ with a weak
order-unit. By Corollary~\ref{rep cor}, $G'$ is isomorphic to $C(X, \mathbb{Z})$ for some Stone space $X$. Under pointwise multiplication,
$C(X, \mathbb{Z})$ is a $\mathbb{Z}$-algebra. It is torsion-free and generated by its idempotents, which are continuous characteristic
functions. Thus, $G$ embeds in $C(X, \mathbb{Z})$ and, as singular elements of $G$ are sent to idempotents of $C(X, \mathbb{Z})$, $G$ is
isomorphic to the additive group of a subalgebra
of $C(X, \mathbb{Z})$, which is a torsion-free $\mathbb{Z}$-algebra generated by idempotents.

Conversely, suppose $G$ is isomorphic to the additive group of a torsion-free $\mathbb{Z}$-algebra $A$ which is generated by
idempotents. If $A$ has a 1, then by \cite[Thm.~4.1]{BMMO15a}, $A$ is a Specker $\mathbb{Z}$-algebra, so by Theorem~\ref{main}, $A$ is
isomorphic to $C(Y, \mathbb{Z})$, where $Y$ is the Stone space of $\Id(A)$. Therefore, $G$ is a Specker $\ell$-group by Corollary~\ref{rep cor}.

If $A$ does not have a 1, then we may embed $A$ into a $\mathbb{Z}$-algebra with 1, namely $A' := A \times \mathbb{Z}$, where
addition is componentwise and multiplication is given by $(r,n)(s,m) = (rs + mr+ns, nm)$ \blu (see, e.g., \cite[p.~119, Thm.~1.10]{Hun80})\blk. The resulting ring with 1 is easily seen to be
torsion-free and
\[
\Id(A') = \{(e,0) \mid e \in \Id(A) \} \cup \{(-e, 1) \mid e \in \Id(A) \}.
\]
From this description $A'$ is generated by idempotents, so $A'$ is a Specker $\mathbb{Z}$-algebra, and hence $A'$ is isomorphic to
$C(Y, \mathbb{Z})$, where $Y$ is the Stone space of $\Id(A')$. The description of $\Id(A')$ shows that $\Id(A)$ embeds in $\Id(A')$
as a maximal ideal. Therefore, by Stone duality, it \blu corresponds to \blk the complement of a point $y \in Y$. Let $\alpha : A' \to C(Y, \mathbb{Z})$ be the canonical isomorphism
which sends $e \in \Id(A')$ to the characteristic function $\chi_{K_e}$, where $K_e$ is the clopen subset of $Y$ corresponding to $e$.
We show that $\alpha(A) = \{f \in C(Y, \mathbb{Z}) \mid f(y) = 0\}$, which is an $\ell$-ideal of $C(Y, \mathbb{Z})$. First, if
$e \in \Id(A)$, then $e \notin y$, and so $\chi_{K_e}(y) = 0$. Therefore, each $a \in A$ is sent to a function that vanishes at $y$.
Conversely, if $f \in C(Y, \mathbb{Z})$ with $f(y) = 0$, let $f = m_1\chi_{K_1} + \cdots + m_n \chi_{K_n}$ be an orthogonal representation.
Since $f(y) = 0$, we see that $y \notin K_i$ for each $i$. There are $e_i \in \Id(A')$ with $K_i = K_{e_i}$, and so each $e_i \notin y$.
This means each $e_i \in A$. Therefore, $f$ lies in the image of $A$. This shows that the image of $A$ is an $\ell$-ideal of
$C(Y, \mathbb{Z})$. Consequently, $G$ is a Specker $\ell$-group by Theorem~\ref{ideal}.
\qed
\end{proof}

Theorem~\ref{ig} gives  a description of  Specker $\ell$-groups purely in ring-theoretic terminology. 
Bergman \cite{Ber72} used a ring-theoretic approach to recover N\"{o}beling's theorem in the special case of a Specker $\ell$-group with a weak order-unit. 
We state his theorem in our terminology.

\begin{theorem} \label{Bergman}  {\em (Bergman \cite[Theorem 1.1]{Ber72})}
If $A$ is a Specker $\mathbb{Z}$-algebra, then $A$ is free as a ${\mathbb{Z}}$-module and has a basis consisting of idempotents.
\end{theorem}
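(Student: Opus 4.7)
The plan is to use Theorem~\ref{main} to identify $A$ with $C(X,\mathbb{Z})$ for the Stone space $X$ of $B := \Id(A)$, so that idempotents of $A$ correspond to characteristic functions $\chi_U$ of clopens $U \subseteq X$. I will exhibit a collection of clopens whose characteristic functions form a $\mathbb{Z}$-basis of $A$ by transfinite induction on $\kappa = |B|$.

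For the base case $B$ finite, the atoms of $B$ partition $X$ into finitely many clopens whose characteristic functions form a $\mathbb{Z}$-basis of $A$. For the inductive step, assume the result for every Specker $\mathbb{Z}$-algebra whose idempotent algebra has cardinality less than $|B|$. Well-order $B$ as $(b_\gamma)_{\gamma<\kappa}$ with $b_0 = 1$, let $B_\gamma$ be the Boolean subalgebra generated by $\{b_\delta : \delta \leq \gamma\}$, and set $A_\gamma := \mathbb{Z}[B_\gamma]$, so $|B_\gamma| < \kappa$ and $A = \bigcup_\gamma A_\gamma$. I plan to build an increasing chain $E_\gamma \subseteq B_\gamma$ of idempotent bases of $A_\gamma$; limit stages take unions, which work since any $\mathbb{Z}$-linear dependence has finite support and hence lives in some $A_\gamma$.

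The successor step $\gamma \to \gamma+1$ is the heart of the argument. Here $A_{\gamma+1} = A_\gamma + A_\gamma \cdot b_\gamma$, and I need $A_\gamma$ to be a direct summand of $A_{\gamma+1}$ with a complement that is free on idempotents. Letting $q : X \to X_\gamma$ be the quotient dual to $B_\gamma \hookrightarrow B$ and $U_\gamma \subseteq X$ the clopen corresponding to $b_\gamma$, the key computation is to identify $A_{\gamma+1}/A_\gamma$ with $C(P_\gamma,\mathbb{Z})$, where $P_\gamma \subseteq X_\gamma$ is the closed subspace of those $p$ whose fiber $q^{-1}(p)$ meets both $U_\gamma$ and $X \setminus U_\gamma$. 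Since $P_\gamma$ is a closed (hence Stone) subspace of $X_\gamma$ with $|\operatorname{Clop}(P_\gamma)| \leq |B_\gamma| < \kappa$, the inductive hypothesis applied to the Specker $\mathbb{Z}$-algebra $C(P_\gamma,\mathbb{Z})$ furnishes a basis of characteristic functions $\{\chi_V\}_V$ on $P_\gamma$; each such $V$ extends by zero-dimensionality to a clopen $W \subseteq X_\gamma$, and one verifies that $\chi_{q^{-1}(W) \cap U_\gamma} \in A_{\gamma+1}$ is an idempotent lifting $\chi_V$. These lifts span a free complement $F_\gamma$ with $A_{\gamma+1} = A_\gamma \oplus F_\gamma$, and adjoining them to $E_\gamma$ yields $E_{\gamma+1}$. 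The main obstacle is the identification $A_{\gamma+1}/A_\gamma \cong C(P_\gamma,\mathbb{Z})$, which requires computing the kernel of $a \mapsto a b_\gamma + A_\gamma$ in the Stone picture and invoking the surjectivity of the restriction $C(X_\gamma,\mathbb{Z}) \to C(P_\gamma,\mathbb{Z})$ coming from zero-dimensionality; a naive greedy construction (well-order the clopens and add $\chi_{U_\alpha}$ to the basis whenever it is not in the $\mathbb{Z}$-span of previously chosen ones) fails, as already illustrated by $X = \{1,2,3\}$ with the ordering $\{1,3\}, \{2,3\}, \{1,2\}, \{3\}$, so this structural quotient analysis is unavoidable.
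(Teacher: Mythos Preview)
The paper does not prove this theorem; it merely cites Bergman \cite{Ber72} and remarks that his argument ``amounts to showing that for each Boolean algebra $B$, the abelian group $\mathbb{Z}[B]$ is free and has a basis of idempotents.'' So there is no proof in the paper to compare against. That said, your proposal is essentially Bergman's own argument: a transfinite filtration of $B$ by small Boolean subalgebras, with the successor step reducing to the observation that adjoining a single idempotent produces a quotient that is again a Specker $\mathbb{Z}$-algebra of strictly smaller size. Your identification $A_{\gamma+1}/A_\gamma \cong C(P_\gamma,\mathbb{Z})$ via the Stone picture is correct, as is the lifting of the idempotent basis back through the split exact sequence, and your remark on why a naive greedy choice fails is apt.

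Two small indexing glitches to clean up. First, with $B_\gamma$ defined as the subalgebra generated by $\{b_\delta : \delta \le \gamma\}$, the element $b_\gamma$ already lies in $B_\gamma$, so the successor step should read $A_{\gamma+1} = A_\gamma + A_\gamma\cdot b_{\gamma+1}$ and $U$ should be the clopen for $b_{\gamma+1}$; alternatively, define $B_\gamma$ via $\delta < \gamma$. Second, with the $\le$ convention, at a limit ordinal $\lambda$ the algebra $B_\lambda$ is not simply $\bigcup_{\gamma<\lambda} B_\gamma$ but that union together with $b_\lambda$, so the limit stage is a union followed by one more successor step; again this disappears if you use the strict inequality. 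Neither affects the substance of the argument.
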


Theorem~\ref{Bergman} is restricted to unital ${\mathbb{Z}}$-algebras and hence only gives
the freeness of Specker $\ell$-groups with a weak order-unit. However, combining Bergman's theorem with Corollary~\ref{ideal} and Theorem~\ref{ig}
 recovers N\"obeling's theorem that every Specker group is free with a basis of characteristic functions:

\begin{corollary}
If $G$ is a Specker $\ell$-group, then $G$ is a free abelian group having a basis of singular elements.
\end{corollary}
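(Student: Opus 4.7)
My plan is to reduce the statement to Bergman's theorem (Theorem~\ref{Bergman}) by splitting into two cases according to whether $G$ has a weak order-unit.

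In the first case, suppose $G$ has a weak order-unit. Then by Corollary~\ref{rep cor} and Corollary~\ref{mult}, $G$ is (isomorphic to) the Specker $\mathbb{Z}$-algebra $C(X,\mathbb{Z})$ for some Stone space $X$, whose idempotents are precisely the characteristic functions of clopen subsets of $X$---equivalently, the singular elements of $G$. Theorem~\ref{Bergman} then directly produces a $\mathbb{Z}$-basis of $G$ consisting of idempotents, i.e., singular elements, completing this case.

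In the second case, assume $G$ lacks a weak order-unit. By Theorem~\ref{ideal}, $G$ embeds as an $\ell$-ideal $N$ in a Specker $\ell$-group $H$ with a weak order-unit; from the proof of that theorem, I may take $H = C(Y,\mathbb{Z})$ with $Y = X\cup\{\infty\}$ the one-point compactification of the Stone dual of the singular elements of $G$, and $N$ the kernel of the evaluation map $\phi\colon H\to\mathbb{Z}$ at $\infty$. Applying the first case to $H$ produces a $\mathbb{Z}$-basis $\mathcal{B}\subseteq\func{Id}(H)$. Each $e\in\mathcal{B}$ is a characteristic function, so $\phi(e)\in\{0,1\}$, splitting $\mathcal{B}=\mathcal{B}_0\sqcup\mathcal{B}_1$ according to whether $e\in N$ or not. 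Since $\phi(1)=1$, $\mathcal{B}_1\ne\varnothing$; fix $e_0\in\mathcal{B}_1$. Because $\mathbb{Z}$ is free, the short exact sequence $0\to N\to H\to\mathbb{Z}\to 0$ splits via $e_0$, so $H\cong N\oplus\mathbb{Z} e_0$ as abelian groups, which at least gives that $N\cong G$ is free.

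The hard part is producing a basis of $N$ made up of singular elements of $G$, since $\mathcal{B}_0$ alone need not span $N$ and the members of $\mathcal{B}_1\setminus\{e_0\}$ lie outside $N$. My first attempt would be to replace each $e\in\mathcal{B}_1\setminus\{e_0\}$ by the symmetric-difference idempotent $e+e_0-2ee_0$, which lies in $N$ and is a singular element, and then show that $\mathcal{B}_0$ together with these replacements is a $\mathbb{Z}$-basis of $N$. The subtle point is that the product $ee_0$, while an idempotent of $H$, need not itself belong to $\mathcal{B}$, so verifying that the change-of-basis matrix is unimodular is not routine. A cleaner alternative is to apply Bergman's transfinite-induction argument directly to $N$ itself, viewed via Theorem~\ref{ig} as a torsion-free $\mathbb{Z}$-algebra (possibly without $1$) generated by its idempotents, and to verify that his construction extends from Boolean algebras to the generalized Boolean algebra of singular elements of $G$ without needing a multiplicative identity; this is the main technical obstacle I anticipate.
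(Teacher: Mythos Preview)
Your Case~1 is correct and matches the paper's implicit handling of that situation. In Case~2, however, you stop short of a complete proof: you establish that $G\cong N$ is free as an abelian group via the split exact sequence, but neither of your two proposed routes to a basis of singular elements---the symmetric-difference replacement $e\mapsto e+e_0-2ee_0$ or the extension of Bergman's transfinite construction to the non-unital generalized Boolean setting---is actually carried out. So as written the proposal has a genuine gap at precisely the point you yourself flag as ``the hard part.''

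The paper sidesteps this difficulty with a much simpler idea: it neither splits into cases nor modifies $\mathcal{B}$ at all. After embedding $G$ as an $\ell$-ideal in a Specker $\ell$-group $H$ with weak order-unit and obtaining from Bergman's theorem a basis $\mathcal{B}$ of $H$ consisting of singular elements, the paper shows directly that $G\cap\mathcal{B}$ is already a basis for $G$. The argument exploits the $\ell$-ideal condition rather than any change of basis: for $0<a\in G$, write $a=m_1g_1+\cdots+m_ng_n$ with $g_i\in\mathcal{B}$ and $m_i>0$; then since each $m_i\ge 1$ and each $g_j\ge 0$, one has $0<g_i\le a$, and the defining property of an $\ell$-ideal ($|b|\le|a|$ and $a\in G$ imply $b\in G$) forces $g_i\in G$. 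For general $a\in G$, apply this to $a^+$ and $a^-$ separately. Linear independence of $G\cap\mathcal{B}$ is inherited from $\mathcal{B}$. In your notation, $G\cap\mathcal{B}$ is exactly $\mathcal{B}_0$, so the elements of $\mathcal{B}_1$ you were struggling to replace are simply irrelevant to $G$. You were searching for a clever modification of the basis when the answer was to intersect it with $G$ and let the order-theoretic absorption do the work.
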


\begin{proof}
By Corollary~\ref{ideal}, there is a Specker $\ell$-group $H$ with a weak order-unit such that $G$ is \blu (isomorphic to) \blk an $\ell$-ideal
of $H$. By Theorem~\ref{ig}, $H$ is a Specker $\mathbb{Z}$-algebra with a weak order-unit, and then by Theorem~\ref{Bergman}, there is a basis $\mathcal{B}$ of $H$ consisting of singular elements. We claim that
\blu$G \cap \mathcal{B}$ \blk is a basis for $G$. Let $a \in G$ and first suppose that $0 < a$. Then we may write $a = m_1g_1 + \cdots + m_ng_n$
for some $g_i \in \mathcal{B}$ and some $0 < m_i$. We have $0 < g_i \le a$, so \blu $g_i \in G$. \blk Consequently, $a$ is a $\mathbb{Z}$-linear combination of
elements from \blu $G \cap \mathcal{B}$. \blk For an arbitrary $a \in G$, from the previous case both $a^+$ and $a^-$ are in the $\mathbb{Z}$-span of \blu $G \cap \mathcal{B}$. \blk Since $a = a^+ - a^-$, we see that $a$ is in the span of \blu $G \cap \mathcal{B}$. \blk Thus, \blu $G \cap \mathcal{B}$ \blk is
a basis for $G$ consisting of singular elements of $G$.
\qed
\end{proof}

 Bergman's proof of Theorem~\ref{Bergman} amounts to showing that for each Boolean algebra $B$, the abelian group ${\mathbb{Z}}[B]$ is free and has a basis of idempotents. Combining Theorem~\ref{main} with the observation that for a ring $R$ with 1, we have $R[B] \cong R \otimes_{\mathbb{Z}} {\mathbb{Z}}[B]$, we thus obtain that every Specker $R$-algebra is a free $R$-module.

It follows that if $R$ is a domain, then the idempotent-generated $R$-algebras $A$ that are Specker are precisely those that are free as $R$-modules (since in the latter case every nonzero idempotent is faithful). Less obviously, this remains true if $R$ is indecomposable, and that in this case, $A$ is projective iff $A$ is free:

\begin{theorem} \emph{\cite[Thm.~3.12]{BMMO15a}} \label{proj}
Let $R$ be indecomposable and let $A$ be an idempotent-generated $R$-algebra. Then the following are equivalent.
\begin{enumerate}[$(1)$]
\item  $A$ is a Specker $R$-algebra.
\item $A$ is a free $R$-module.
\item $A$ is a projective $R$-module.
\end{enumerate}
\end{theorem}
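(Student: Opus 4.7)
The plan is to prove the cycle $(1) \Rightarrow (2) \Rightarrow (3) \Rightarrow (1)$. The first two implications follow quickly from results already in hand; the last is the genuine content.

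For $(1) \Rightarrow (2)$, I would invoke Theorem~\ref{main} to identify $A$ with $R[B]$ for some Boolean algebra $B$, and then compare presentations (or apply the universal property in Lemma~\ref{UMP}) to obtain $R[B] \cong R \otimes_{\mathbb{Z}} \mathbb{Z}[B]$. Bergman's Theorem~\ref{Bergman} yields a $\mathbb{Z}$-basis of $\mathbb{Z}[B]$ consisting of idempotents, and extension of scalars preserves this freeness, giving the desired $R$-basis of $A$ (whose elements are still idempotents). The implication $(2) \Rightarrow (3)$ is immediate, since free modules are projective.

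For $(3) \Rightarrow (1)$, Theorem~\ref{ind} reduces the task to showing that every nonzero idempotent $e\in A$ satisfies $\func{Ann}_R(e)=0$. Using a dual basis $\{(a_\alpha,\phi_\alpha)\}$ for the projective module $A$, I would write $e = \sum_\alpha \phi_\alpha(e)\, a_\alpha$; some $\phi_\alpha(e)$ is nonzero, and for any $r \in \func{Ann}_R(e)$ the identity $r\phi_\alpha(e) = \phi_\alpha(re) = 0$ gives $\func{Ann}_R(e) \subseteq \func{Ann}_R\bigl(\phi_\alpha(e)\bigr)$ for each $\alpha$. To turn this into $\func{Ann}_R(e)=0$, I would reduce modulo each prime $\mathfrak{p}$ of $R$: the quotient $A/\mathfrak{p}A$ inherits projectivity and idempotent-generation over the domain $R/\mathfrak{p}$, and the Example following Definition~\ref{def of Specker} then identifies $A/\mathfrak{p}A$ as a Specker $R/\mathfrak{p}$-algebra. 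Whenever $\phi_\alpha(e)\notin\mathfrak{p}$, the image $\bar e$ in $A/\mathfrak{p}A$ is a nonzero idempotent and is therefore faithful over $R/\mathfrak{p}$, so $\func{Ann}_R(e)\subseteq\mathfrak{p}$; varying $\alpha$ shows that $\func{Ann}_R(e)$ lies in every prime of $R$ not containing the ideal $\sum_\alpha R\,\phi_\alpha(e)$.

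The hard part, and the step I expect to be the main obstacle, is then upgrading this last observation to the conclusion $\func{Ann}_R(e)=0$. A naive intersection-of-primes argument is not enough, because the ideal $\sum_\alpha R\,\phi_\alpha(e)$ need not meet every minimal prime of $R$, and because $R$ may have nonzero nilpotents. Closing the argument appears to require genuinely using the idempotency of $e$ together with the indecomposability of $R$ beyond what the module-theoretic picture supplies — for instance by localizing at a maximal ideal where Kaplansky's theorem (projective equals free over a local ring) can be combined with the idempotent structure, or by invoking the Pierce sheaf and Boolean-action framework alluded to in Remark~\ref{rem: locally Specker} to control the locus where $\bar e$ vanishes. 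This is the step where I would expect to lean most heavily on the structural results of \cite{BMMO15a}.
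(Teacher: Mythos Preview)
Your implications $(1)\Rightarrow(2)\Rightarrow(3)$ match exactly what the paper does: the paragraph preceding the theorem sketches $(1)\Rightarrow(2)$ via $R[B]\cong R\otimes_{\mathbb Z}\mathbb Z[B]$ together with Bergman's Theorem~\ref{Bergman}, just as you propose, and $(2)\Rightarrow(3)$ is trivial.

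For $(3)\Rightarrow(1)$ there is nothing in the paper to compare against: this survey does not prove that direction but simply cites \cite[Thm.~3.12]{BMMO15a}, remarking only that the passage from domains (handled by the Example after Definition~\ref{def of Specker}) to indecomposable rings is ``less obvious.'' Your honest assessment that your argument does not close is therefore in line with the paper's own treatment --- it defers the hard direction to the original source.

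That said, your localization-plus-Kaplansky instinct can be pushed further than you indicate. Over a local ring $R_{\mathfrak m}$ with $A_{\mathfrak m}$ free on a basis $\{b_i\}$, writing a nonzero idempotent as $e=\sum_{i=1}^n s_i b_i$ and expanding $e^2=e$ in that basis shows that the finitely generated ideal $J=(s_1,\dots,s_n)$ satisfies $J=J^2$; Nakayama then forces $J=R_{\mathfrak m}$ (since $e\ne 0$ rules out $J=0$), whence $\func{Ann}_{R_{\mathfrak m}}(e)=0$. The genuine obstacle --- which your dual-basis reduction modulo primes meets from another side --- is the globalization: passing from $\func{Ann}_{R_{\mathfrak m}}(e_{\mathfrak m})=0$ for all $\mathfrak m$ with $e_{\mathfrak m}\ne 0$ to $\func{Ann}_R(e)=0$ is exactly where indecomposability of $R$ must do nontrivial work, and the survey does not reproduce how \cite{BMMO15a} closes this gap.
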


\begin{remark}
A converse of Theorem~\ref{proj} is also true: If $R$ is a ring, then $R$ is indecomposable iff for each idempotent-generated $R$-algebra $A$ that is projective as an $R$-module, $A$ is a Specker $R$-algebra; see \cite[Thm.~7.2]{BMO18}.  In \cite[Sec.~7]{BMO18}, further homological aspects of idempotent-generated algebras (e.g., flatness) are studied in relation to Specker $R$-algebras as well as to the locally Specker $R$-algebras discussed in Remark~\ref{rem: locally Specker}.
\end{remark}

\subsection{Specker ${\mathbb{R}}$-algebras}

In \cite[Def.~5.1]{BMO13a}, a {Specker ${\mathbb{R}}$-algebra} is defined to be a unital ${\mathbb{R}}$-algebra $A$ such that $A$ is generated as an ${\mathbb{R}}$-algebra by the  idempotents in $A$. Since  an ${\mathbb{R}}$-algebra is  torsion-free as an ${\mathbb{R}}$-module, this definition agrees with the definition of Specker algebra given in Section 3. In this section we use results from \cite{BMO13a} to illustrate the role that Specker ${\mathbb{R}}$-algebras play in the context of rings of continuous real-valued functions.

Let $A$ be a Specker ${\mathbb{R}}$-algebra. By Theorem~\ref{main}, there is a Stone space $X$, namely the Stone space of $\Id(A)$, such that $A$ is isomorphic as an ${\mathbb{R}}$-algebra to $C(X,{\mathbb{R}}_{\func{disc}})$. The discrete topology is somewhat unnatural when dealing with ${\mathbb{R}}$; instead we wish to consider ${\mathbb{R}}$ with the usual interval topology. By \cite[Prop.~5.4]{BMMO15a}, the fact that ${\mathbb{R}}$ is a totally ordered ring implies that $C(X,{\mathbb{R}}_{\func{disc}}) = FC(X,{\mathbb{R}})$, the ring of finitely-valued continuous functions into ${\mathbb{R}}$ with the usual interval topology. Since $X$ is compact, it follows that $FC(X,{\mathbb{R}})$ is the ring $PC(X,{\mathbb{R}})$ of piecewise constant real-valued continuous functions (see \cite[Rem.~5.5]{BMMO15a}). Thus, we obtain the following representation theorem for Specker ${\mathbb{R}}$-algebras.

\begin{theorem}  \emph{\cite[Thm.~6.2(6)]{BMO13a}}\label{bal}
An ${\mathbb{R}}$-algebra $A$ is  a Specker ${\mathbb{R}}$-algebra iff $A$ is isomorphic as an ${\mathbb{R}}$-algebra to $PC(X, {\mathbb{R}})$ for some Stone space $X$.
\end{theorem}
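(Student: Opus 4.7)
The plan is to deduce Theorem~\ref{bal} from Theorem~\ref{main} by identifying $C(X,\mathbb{R}_{\func{disc}})$ with $PC(X,\mathbb{R})$ as $\mathbb{R}$-algebras whenever $X$ is a Stone space. Theorem~\ref{main} already gives that an $\mathbb{R}$-algebra $A$ is a Specker $\mathbb{R}$-algebra iff $A \cong C(X,\mathbb{R}_{\func{disc}})$ for some Stone space $X$, so the only genuine work is to bridge the discrete and usual topologies on $\mathbb{R}$.

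For the identification, I would first note that a function $f : X \to \mathbb{R}$ is continuous into $\mathbb{R}_{\func{disc}}$ iff each fiber $f^{-1}(r)$ is clopen in $X$. Compactness then forces $f$ to take only finitely many values, since a partition of a compact space into pairwise disjoint nonempty open sets must be finite. Thus every $f \in C(X,\mathbb{R}_{\func{disc}})$ partitions $X$ into finitely many clopens on which it is constant, which is precisely the description of an element of $PC(X,\mathbb{R})$. Conversely, any piecewise constant continuous function has clopen level sets and is therefore continuous into $\mathbb{R}_{\func{disc}}$. Since the ring operations are pointwise in both descriptions, one obtains an $\mathbb{R}$-algebra isomorphism $C(X,\mathbb{R}_{\func{disc}}) \cong PC(X,\mathbb{R})$. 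Combined with Theorem~\ref{main}, this yields the forward direction.

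For the converse, suppose $A \cong PC(X,\mathbb{R})$ with $X$ a Stone space. The characteristic functions $\chi_U$ of clopen subsets $U \subseteq X$ are idempotents of $A$ and form a Boolean subalgebra of $\Id(A)$ isomorphic to the clopen algebra of $X$; these generate $A$ as an $\mathbb{R}$-algebra because every piecewise constant function is an $\mathbb{R}$-linear combination of such characteristic functions. Moreover, every $\mathbb{R}$-algebra is automatically torsion-free as an $\mathbb{R}$-module (since $\mathbb{R}$ is a field), so every nonzero idempotent is faithful. Thus $A$ has a generating algebra of faithful idempotents, and hence is a Specker $\mathbb{R}$-algebra.

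The only substantive step, modest as it is, is the identification $C(X,\mathbb{R}_{\func{disc}}) = PC(X,\mathbb{R})$, and in particular the use of compactness of $X$ to reduce continuity into the discrete real line to the finitely-valued piecewise constant case. Everything else is a direct appeal to Theorem~\ref{main} or routine unwinding of definitions.
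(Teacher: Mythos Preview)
Your proposal is correct and follows essentially the same route as the paper: the discussion preceding Theorem~\ref{bal} invokes Theorem~\ref{main} to obtain $A \cong C(X,\mathbb{R}_{\func{disc}})$ and then identifies this with $PC(X,\mathbb{R})$, going through the intermediate equality $C(X,\mathbb{R}_{\func{disc}}) = FC(X,\mathbb{R})$ (finitely-valued continuous functions, using that $\mathbb{R}$ is totally ordered) and then $FC(X,\mathbb{R}) = PC(X,\mathbb{R})$ (using compactness), citing \cite[Prop.~5.4, Rem.~5.5]{BMMO15a}. Your direct argument for the identification, via clopen fibers and the finite-subcover argument, amounts to the same thing without the intermediate $FC$ stop; and your separate converse argument is fine but redundant, since once $C(X,\mathbb{R}_{\func{disc}}) = PC(X,\mathbb{R})$ is established as an equality, both directions follow from Theorem~\ref{main}.
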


The theorem implies that every Specker ${\mathbb{R}}$-algebra $A$ admits an $\ell$-algebra structure.
Since the isomorphism in the theorem sends idempotents to idempotents, it follows that the isomorphism carries the set of idempotents of $A$ onto the set of characteristic functions on $X$, and hence the $\ell$-algebra structure on $A$ extends the usual order on the Boolean algebra of idempotents of $A$.

The next theorem gives a ring-theoretic characterization of Specker $\mathbb{R}$-algebras. For this we recall that a ring $A$ is a \emph{von Neumann regular ring} if for all $ a\in A$ there is $b \in A$ with $aba=a$.

\begin{theorem} \emph{\cite[Thm.~6.2(2)]{BMO13a}} \label{Specker char}
Let $A$ be an $\ell$-subalgebra of $C(X, \mathbb{R})$ for some compact Hausdorff space $X$. Then $A$ is a Specker $\mathbb R$-algebra iff $A$ is a von Neumann regular ring.
\end{theorem}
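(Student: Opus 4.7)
The plan is to treat the two directions separately: the forward direction will reduce to the explicit description of Specker $\mathbb{R}$-algebras in Theorem~\ref{bal}, while the converse is where the real work lies and will use pointwise analysis of von Neumann regular inverses.

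For the forward direction, if $A$ is a Specker $\mathbb{R}$-algebra then by Theorem~\ref{bal} we may assume $A = PC(Y,\mathbb{R})$ for some Stone space $Y$. Given $a\in A$, write $a = \sum_{i=1}^{n} r_i \chi_{K_i}$ with the $K_i$ pairwise disjoint clopen subsets of $Y$ and $r_i \in \mathbb{R}$; setting $b = \sum_{r_i\neq 0}(1/r_i)\chi_{K_i}$ gives an element of $PC(Y,\mathbb{R})$ with $aba = a$, so $A$ is von Neumann regular.

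For the converse, I would fix $a\in A$ and show that $a$ is a finite $\mathbb{R}$-linear combination of idempotents of $A$. The idea is to shift by each real $r$: since $A$ is a unital $\mathbb{R}$-subalgebra of $C(X,\mathbb{R})$, the element $a-r$ lies in $A$, so von Neumann regularity produces $c\in A$ with $(a-r)c(a-r) = a-r$. Then $e_r := (a-r)c$ is an idempotent of $A$, and being idempotent in $C(X,\mathbb{R})$ it is a $\{0,1\}$-valued continuous function. Evaluating the identity $(a-r)c(a-r) = a-r$ pointwise shows $c(x) = 1/(a(x)-r)$ whenever $a(x)\neq r$, and so $e_r$ equals the characteristic function of $\{a\neq r\}$; hence $1-e_r = \chi_{a^{-1}(r)} \in A$ and the level set $a^{-1}(r)$ is clopen in $X$. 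The final step is to argue that $a$ takes only finitely many values: $a(X)$ is compact in $\mathbb{R}$, and if $r\in a(X)$ were not isolated in $a(X)$ we could pick $r_n\to r$ in $a(X)\setminus\{r\}$ and $x_n\in a^{-1}(r_n)$, whose cluster point $x$ (by compactness of $X$) would lie in the open set $a^{-1}(r)$, forcing the $x_n$ eventually into $a^{-1}(r)$ and contradicting $a(x_n)=r_n\neq r$. Thus $a(X) = \{r_1,\dots,r_n\}$ is finite and $a = \sum_{i=1}^n r_i\,\chi_{a^{-1}(r_i)}$ is an $\mathbb{R}$-linear combination of idempotents of $A$, showing $A$ is a Specker $\mathbb{R}$-algebra.

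The main obstacle is the identification $e_r = \chi_{\{a\neq r\}}$: one has to recognize that on the locus where $a-r$ does not vanish, the von Neumann inverse is forced to be the ordinary pointwise reciprocal, which is what promotes the ring-theoretic identity into a topological statement about clopenness of level sets. Once this is in hand, the compactness-plus-isolation argument converting ``all level sets clopen'' into ``finitely many values'' is a clean net-theoretic maneuver, and no use of the $\ell$-structure of $A$ is needed.
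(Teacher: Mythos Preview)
The paper does not supply a proof of this theorem; it is stated with a citation to \cite[Thm.~6.2(2)]{BMO13a} and no argument is given in the text. So there is nothing in the present paper to compare your proposal against.

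Your argument is correct in both directions. The forward direction via Theorem~\ref{bal} is immediate, and your pointwise analysis of the von Neumann quasi-inverse in the converse is exactly the right idea: at any $x$ with $a(x)\ne r$ the identity $(a-r)c(a-r)=a-r$ forces $c(x)=1/(a(x)-r)$, so $e_r=(a-r)c$ is indeed $\chi_{\{a\ne r\}}$, and hence every level set $a^{-1}(r)$ is clopen with $\chi_{a^{-1}(r)}\in A$. Two small remarks. First, the finiteness step can be shortened: the nonempty level sets $a^{-1}(r)$, $r\in a(X)$, form a pairwise disjoint open cover of the compact space $X$, so there can only be finitely many of them; this avoids the net/cluster-point argument entirely. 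Second, your closing observation is apt: the hypothesis that $A$ is an $\ell$-subalgebra is used only to place $A$ inside some $C(X,\mathbb{R})$ as a unital $\mathbb{R}$-subalgebra; the lattice operations play no role in the proof itself.
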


\begin{remark}
Let $X$ be a compact Hausdorff space. The uniform norm on $C(X, \mathbb{R})$ is given by $\|f\| = \sup\{ |f(x)| \mid x \in X\}$. If $X$ is a Stone space, then $PC(X,{\mathbb{R}})$ separates points of $X$, hence by the Stone-Weierstrass theorem, $PC(X, \mathbb{R})$ is uniformly dense in $C(X,{\mathbb{R}})$. Thus, Specker ${\mathbb{R}}$-algebras can be viewed as encoding the essential algebraic data of rings of continuous real-valued functions on Stone spaces. In fact, it follows from \cite[Secs.~5 and 6]{BMO13a} that for a Stone space $X$ the Specker $\mathbb{R}$-algebra $PC(X,{\mathbb{R}})$ is the smallest uniformly dense $\ell$-subalgebra of $C(X, \mathbb{R})$. For a more formal treatment of this idea in terms of categories and coreflectors see \cite[Sec.~6]{BMO13a}.
\end{remark}

\begin{remark}
If an $\ell$-group $G$ has an $\mathbb{R}$-vector space structure compatible with the order, then $G$ is called a \emph{vector lattice} or \emph{Riesz space}. These structures have been investigated in detail (see, e.g., \cite{LZ71}). A Specker $\mathbb{R}$-algebra $A$ viewed as a vector lattice is \emph{hyperarchimedean}, meaning that $A/I$ is archimedean for each $\ell$-ideal $I$ of $A$ (the converse is also true; see \cite[Thm.~6.2]{BMO13a}). Hyperarchimedean vector lattices with a weak order-unit were studied in \cite{BM14}, where a result analogous to Theorem~\ref{bal} was proved. It follows that the category of Specker $\mathbb{R}$-algebras is equivalent to the category of hyperarchimedean vector lattices with a weak order-unit.
\end{remark}

\def\cprime{$'$}

\end{document}